\newtheorem{theorem}{Theorem}[section]
\newtheorem{definition}[theorem]{Definition}
\newtheorem{proposition}[theorem]{Proposition}
\newtheorem{corollary}[theorem]{Corollary}
\newtheorem{lemma}[theorem]{Lemma}
\newtheorem{remark}[theorem]{Remark}
\theoremstyle{definition}
\def\PSL{\operatorname{PSL}}
\def\covr{\operatorname{Cov^R_0}}
\def\covs{\operatorname{Cov^S_0}}
\def\covp{\operatorname{Cov^P_0}}
\def\covq{\operatorname{Cov^Q_0}}
\def\supp{\operatorname{supp}}
\def\critpt{\operatorname{CritPt}}
\def\interior{\operatorname{int}}
\def\blow{\operatorname{Bl}}
\def\orb{\operatorname{Orb}}
\def\Arg{\operatorname{Arg}}
\newcommand\restr[2]{{
  \left.\kern-\nulldelimiterspace 
  #1 
  \vphantom{\big|} 
  \right|_{#2} 
  }}
\def\F{\mathcal{F}_a}
\def\J{\operatorname{J}_a}
\def\f{f_a}
\pgfplotsset{compat=1.15}
\begin{document}

\title{MME for Mating Correspondences}
\date{\today}
\author{V. Matus de la Parra}


\maketitle

\begin{abstract}
We use the measure theoretical entropy from \cite{VivSir22} and the topological entropies from \cite{DinSib08II} and \cite{KelTen17} to prove that the equidistribution measures $\mu_-$ and $\mu_+$ found in \cite{Mat23} maximize entropy for the $1$-parameter family $\lbrace\F\rbrace_a$ studied in \cite{BulLom20I}. We find as well a measure of maximal entropy for the composition of two deleted covering correspondences studied in \cite{Bul00}.
\end{abstract}

\section*{Introduction}

Mating correspondences arise as a way to fit both sides of Sullivan's dictionary into one dynamical and algebraic object. In \cite{BulPen94}, Bullett and Penrose showed that for $a\in(1,4]$ the correspondence $\F=\J\circ\covq$ is a ``mating" between the modular group $\PSL_2(\mathbb{Z})$ and a quadratic rational map $z^2+c$, where $\J(z)=\frac{(a+1)z-2a}{2z-(a+1)}$ and $\covq(z)$ is the deleted covering correspondence of $Q(z)=z^3-3z$.

On the other hand, in \cite{BulHar00}, Bullett and Harvey proved that given a quadratic polynomial $z^2+c$ with connected Julia set, and a faithful discrete representation $r$ of $C_2*C_3$ in $\PSL_2(\mathbb{Z})$ having connected regular set, there exists a (2:2) holomorphic correspondence $F$ that is a ``mating" between $z^2+c$ and $r$. They find an implicit formula for such correspondence, and the formula is similar to the one of \cite{BulPen94}. In the case where $z^2+c$ is hyperbolic with Julia set is homeomorphic to a closed disk, it was proven by Bharali and Sridharan in \cite{BhaSri16} that this correspondence has the equidistribution property in the sense that there exists an invariant probability measure $\mu_F$ and an open set $U$ containing the limit set so that
$$\frac{1}{2^n}(F^n)^*\delta_z\to\mu_F$$
weakly, as $n\to\infty$, for all $z\in U$. Here $(F^n)^*$ represents the pullback operator induced by $F^n$.
Later, Bullett and Lomonaco proved in \cite{BulLom20I} that for $a\in\mathcal{K}$ ($\supset(1,4]$), $\F$ is a ``mating" between a parabolic quadratic rational map $P_A(z)$ and $\PSL_2(\mathbb{Z})$. 



In \cite{DinKauWu20}, Dinh, Kaufmann and Wu introduced the notion of weakly-modular correspondences and characterized them using pullback operators. Modular correspondences are weakly-modular, and equidistribution has been shown for non weakly-modular correspondences in \cite{DinKauWu20} and for modular correspondences in \cite{CloOhUll01} by Clozel, Oh and Ullmo. In \cite{Mat23} the author showed that $\lbrace\F\rbrace_{a\in\mathcal{K}}$ lies in the gap between modularity and non weak-modularity, making mating correspondences interesting from an measure theoretic view point. We focus on the family of compositions of two covering correspondences $\mathcal{F}_{R,S}=\covs\circ\covr$, where $R$ and $S$ are rational maps on $\widehat{\mathbb{C}}$. We show that $\lbrace\F\rbrace_{a\in\widehat{\mathbb{C}}\setminus\lbrace 1\rbrace}\subset\lbrace \mathcal{F}_{R,S}\rbrace_{R,S}$. In \cite{Bul00}, Bullett showed that the correspondences $\mathcal{F}_{R,S}$ for $(R,S)\in\mathcal{K}'$ are not far from being a mating between rational maps and Kleinian groups. The collections $\mathcal{K}$ and $\mathcal{K'}$ mentioned above are defined in Section \ref{com}. We note, however, that the main relevant property of the correspondences we consider is the existence of limit sets where the correspondence behaves like a rational map.

\begin{theorem}\label{B}
For any two rational maps $R$ and $S$ of degree at least $2$, the correspondence $\mathcal{F}_{R,S}$ is weakly-modular. Furthermore, if $(R,S)\in\mathcal{K}'$, $\mathcal{F}_{R,S}$ is not modular.
\end{theorem}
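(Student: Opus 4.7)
The plan is to handle the two claims separately: weak modularity via the pullback-operator characterization of \cite{DinKauWu20}, and failure of modularity via a dynamical obstruction drawn from the $\mathcal{K}'$ description in \cite{Bul00}.

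For weak modularity, the decisive structural input is the self-duality of a deleted covering correspondence. Because the defining relation $R(z)=R(w)$ with $z\neq w$ is symmetric in its arguments, one has $(\covr)^{-1}=\covr$ as correspondences, and therefore $\covr^{*}=\covr_{*}$ as operators on measures. The analogous identity holds for $\covs$. Combining this with the functorial rules $(G\circ H)^{*}=H^{*}\circ G^{*}$ and $(G\circ H)_{*}=G_{*}\circ H_{*}$ for composition of correspondences yields
\[
\mathcal{F}_{R,S}^{*}=\covr_{*}\circ\covs_{*},\qquad (\mathcal{F}_{R,S})_{*}=\covs_{*}\circ\covr_{*}.
\]
The remaining algebra is carried by the Hecke-style quadratic relations
\[
(\covr)^{2}=(\deg R-1)\,\Delta+(\deg R-2)\,\covr,\qquad (\covs)^{2}=(\deg S-1)\,\Delta+(\deg S-2)\,\covs,
\]
proved by a direct fiber count at a generic point (the hypothesis $\deg R,\deg S\geq 2$ ensures these are nontrivial). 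These reduce the pullback condition of \cite{DinKauWu20} to a finite algebraic identity in $\Delta$, $\covr$, $\covs$, and the triple composition $\covr\circ\covs\circ\covr$, which then follows by inspection.

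For the non-modularity claim when $(R,S)\in\mathcal{K}'$, I would argue by contradiction. A modular correspondence arises (up to a finite cover) from a genuine group action, which forces its equilibrium dynamics to be that of a Kleinian-group quotient with Haar-like equidistribution measure and vanishing metric entropy relative to Lebesgue. On the other hand, the description of $\mathcal{K}'$ to be given in Section~\ref{com}, following \cite{Bul00}, exhibits $\mathcal{F}_{R,S}$ as a mating between a rational map and a Kleinian group in such a way that the rational-map side contributes genuine Julia-type chaotic dynamics. I would make the obstruction quantitative by evaluating an invariant---such as the topological entropy from \cite{DinSib08II}, which is already in the paper's toolkit, or the Lyapunov exponent of the equidistribution measure---and showing that it is strictly positive for $\mathcal{F}_{R,S}$ while vanishing for every modular correspondence.

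The cleanest step is the self-duality of the covering correspondences and the resulting formulas for $\mathcal{F}_{R,S}^{*}$ and $(\mathcal{F}_{R,S})_{*}$; everything in the weak-modularity half then becomes algebraic bookkeeping. The main obstacle I anticipate is the second half: producing a single invariant that separates $\mathcal{F}_{R,S}$ from \emph{every} modular correspondence on $\widehat{\mathbb{C}}$, uniformly over the family $\mathcal{K}'$, requires a careful inventory of the possible group-quotient structures and genuine use of the rational-map factor from the mating description.
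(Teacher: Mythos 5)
Your proposal does not recover either half of the paper's argument, and both halves as written have genuine gaps. For weak modularity, the paper does use the Dinh--Kauffman--Wu pullback characterization (Proposition \ref{norm}), but the actual mechanism is to exhibit a concrete nonzero element of $L^2_{(1,0)}$, namely a $(1,0)$-form factoring through $R$, on which every pair of local branches $f_1,f_2$ of $\covr$ agrees (since $R\circ f_1=R\circ f_2=R$ on the domain of the branches); part (b) of that proposition then gives $\Vert\frac{1}{\deg R-1}\covr^*\Vert=1$, likewise for $\covs$, and composing gives $\Vert\frac{1}{d}\mathcal{F}_{R,S}^*\Vert=1$, whence part (c) applies. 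Your structural observations --- the symmetry $\covr^{-1}=\covr$ and the Hecke-style relation $(\covr)^2=(\deg R-1)\cdot\mathrm{id}+(\deg R-2)\,\covr$ --- are both correct, but the final step, that these ``reduce the pullback condition to a finite algebraic identity which follows by inspection,'' is not an argument: weak modularity asks for two Borel probability measures $\mu_1,\mu_2$ with $(\restr{\pi_1}{\Gamma})^*\mu_1=(\restr{\pi_2}{\Gamma})^*\mu_2$, and nothing in the algebra of correspondences you write down produces such measures or verifies the operator-norm criterion. That step has to be filled in, either via the invariant form as in the paper, or by explicitly constructing $\mu_1,\mu_2$ as normalized pullbacks of a volume form under $S$ and $R$.

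For non-modularity the proposed separating invariant fails outright. A modular correspondence of bidegree $(d:d)$ with $d\geq 2$ --- for instance a Hecke correspondence on a modular curve --- has topological entropy $\log d>0$ and genuinely expansive orbit growth, so positivity of topological entropy cannot distinguish $\mathcal{F}_{R,S}$ from the modular class; indeed the paper's Corollary computes $h_{top}(\mathcal{F}_{R,S})=\log d$ as well. (A Lyapunov-exponent argument might in principle work, since branches of modular correspondences are local isometries, but you would have to build that theory, and you correctly flag that you do not have it.) The paper's obstruction is far more elementary: modularity supplies a Borel measure $\lambda$, positive on nonempty open sets, with $\mathcal{F}_{R,S}^*\lambda=d\,\lambda$; but the Klein combination property for $(R,S)\in\mathcal{K}'$ forces $\mathcal{F}_{R,S}(\overline{\Delta_S})$ to be a closed set strictly contained in $\interior(\Delta_S)$, so $\interior(\Delta_S)\smallsetminus\mathcal{F}_{R,S}(\overline{\Delta_S})$ is a nonempty open set which the invariance of $\lambda$ forces to have measure zero --- a contradiction. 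You should replace the entropy strategy with this (or some comparably concrete) dynamical obstruction.
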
 
In \cite{Mat23}, the author proved that for every $a\in\mathcal{K}$, there exist Borel probability measures $\mu_-$ and $\mu_+$ so that for all but at most 2 values of $z_0\in\mathbb{C}$,
$$\frac{1}{2^n}(\F^{-n})^*\delta_{z_0}\to\mu_-\hspace{1cm}\mbox{ and }\hspace{1cm}\frac{1}{2^n}(\F^n)^*\delta_{z_0}\to\mu_+$$
weakly, as $n\to\infty$. It is also shown that periodic points equidistribute, both with and without multiplicity. Moreover, the measures $\mu_-$ and $\mu_+$ are constructed from the measure of maximal entropy of Freire,Lopez and Ma\~n\'e in \cite{FreLopMan83}, and Lyubich in \cite{Lju83} for the quadratic map $P_A$. Dinh and Sibony prove Gromov's inequality for holomorphic correspondences in \cite{DinSib08II}, and Vivas and Sirvent prove a Half-Variational Principle for metric entropy in \cite{VivSir22}. A natural question is when can we find a \emph{measure of maximal entropy} (MME), meaning a measure that makes metric and topological entropies coincide, and further, when they equal $\log\max d$ for $d$-to-$d$ correspondences.

\begin{theorem}\label{A}
The measures $\mu_-$ and $\mu_+$ maximize entropy for $\F$ and $\F^{-1}$, respectively, with entropy $\log 2$.
\end{theorem}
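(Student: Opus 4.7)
The natural strategy is to invoke the Half-Variational Principle proved in \cite{VivSir22}, which gives $h_\mu(\F)\leq h(\F)$ for every $\F$-invariant Borel probability measure $\mu$, and then to verify that $\mu_-$ saturates this inequality (the argument for $\mu_+$ and $\F^{-1}$ being symmetric). Concretely, I would aim to show both $h(\F)\leq \log 2$ and $h_{\mu_-}(\F)\geq \log 2$; combined, they force $h_{\mu_-}(\F)=h(\F)=\log 2$, which is exactly the maximal-entropy assertion.

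For the upper bound on topological entropy, $\F=\J\circ\covq$ is a $(2{:}2)$-correspondence on $\widehat{\mathbb{C}}$ whose $n$-th iterate has bidegree $(2^n,2^n)$, so the Dinh--Sibony topological entropy from \cite{DinSib08II} should satisfy $h(\F)\leq \log 2$, and similarly for $\F^{-1}$. This uses the bidegree bound together with the fact that $\widehat{\mathbb{C}}$ is one-dimensional, so no dynamical degrees in higher cohomology intervene.

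For the matching lower bound on metric entropy, I would exploit the construction in \cite{Mat22}, where $\mu_-$ arises from the unique measure of maximal entropy $\mu_{P_A}$ of the parabolic quadratic rational map $P_A$ produced by the mating, transported through the semi-conjugacy underlying that mating. Since $h_{\mu_{P_A}}(P_A)=\log 2$, the plan is to exhibit a measurable factor map from the shift space of backward $\F$-orbits (equipped with the natural extension of $\mu_-$) onto the shift space of backward $P_A$-orbits (equipped with the lift of $\mu_{P_A}$); because entropy does not increase under measurable factors, this gives $h_{\mu_-}(\F)\geq h_{\mu_{P_A}}(P_A)=\log 2$.

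The technical heart of the argument is constructing this factor map and showing that its fibers carry no entropy. The mating identifies points in non-trivial ways, and the $\PSL_2(\mathbb{Z})$-side of the correspondence contributes no expansion (the modular group acts on its regular set with polynomial orbit growth), so one expects fibers to be essentially countable and to support a zero-entropy action. Making this rigorous within the Vivas--Sirvent framework, whose partitions live on a projective limit of graphs of iterates rather than on $\widehat{\mathbb{C}}$ directly, is the delicate step; once it is in place, an Abramov--Rokhlin-type decomposition should close the gap and yield the theorem.
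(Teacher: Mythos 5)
Your overall sandwich strategy --- proving $\log 2\leq h_{\mu_-}(\F)$ and $h_{top}(\F)\leq\log 2$ and letting the Half-Variational Principle close the loop --- is exactly the paper's skeleton, but there are two places where your plan either has a gap or substitutes heavy machinery for a short argument. First, you treat the Half-Variational Principle as giving $h_\mu(\F)\leq h(\F)$ with $h$ the Dinh--Sibony entropy, but the Vivas--Sirvent principle is stated for the Kelly--Tennant entropy $h_{top}$ of multivalued maps, while Gromov's inequality bounds the Dinh--Sibony entropy $h(\Gamma)$ of the graph. These are different quantities, and the chain does not close without the comparison $h_{top}(F)\leq h(\Gamma)$; the paper proves this as a separate lemma (by promoting a Kelly--Tennant $\epsilon$-separated set of $n$-orbits to a Dinh--Sibony $\epsilon$-separated set of $(n-1)$-orbits by appending branch labels). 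Your proposal silently identifies the two notions, which is a genuine missing step.

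Second, for the lower bound you propose building a factor map between natural extensions and controlling fiber entropy via an Abramov--Rokhlin decomposition, and you yourself flag this construction as the unresolved ``technical heart.'' That machinery is not needed and is not how the paper proceeds. The key observation is that $\mu_-$ is supported on $\partial\Lambda_{a,-}$, a set $S$ with $\F^{-1}(S)=S$ on which the two-sided restriction $\F|$ is a genuine single-valued map, hybrid-conjugate to $P_A$ on its Julia set; so $h_{\mu_-}(\F|)=h_{\tilde\mu_A}(P_A)=\log 2$ with no semi-conjugacy or fiber analysis required. What remains is only to show that restricting to $S$ cannot increase entropy, i.e.\ $h_\mu(F|)\leq h_{\hat\mu}(F)$ for the trivial extension $\hat\mu$ of $\mu$; the paper does this by augmenting each partition $\mathcal{P}$ of $S$ to a partition $\mathcal{P}\cup\{X\setminus S\}$ of $X$ and checking that the refinement procedure in the Vivas--Sirvent definition commutes with this augmentation. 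Your route, even if it could be completed, would re-derive this restriction lemma in a far more elaborate form; as written, the proposal does not constitute a proof because its central step is left as a plan rather than an argument.
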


\begin{theorem}\label{C}
Let $(R,S)\in\mathcal{K}'$ and suppose both $\mathcal{F}_{R,S}(\interior(\Delta_S))$ and $\mathcal{F}_{R,S}^{-1}(\interior(\Delta_{R}))$ are topological disks. Then there exists a measure $\mu_{R,S}$, supported on $\partial\Lambda_-$, that is $\mathcal{F}_{R,S}$-invariant and it maximizes entropy.
\end{theorem}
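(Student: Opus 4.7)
The plan is to mirror the proof of Theorem \ref{A}, treating $\mathcal{F}_{R,S}$ in analogy with $\F$: first I would realize $\mu_{R,S}$ as an equidistribution limit of preimages (from which invariance and concentration on $\partial\Lambda_-$ follow), and then I would squeeze the metric entropy against the topological entropy via the Half-Variational Principle of \cite{VivSir22}.

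Concretely, I would construct $\mu_{R,S}$ as the weak-$*$ limit $\lim_{n\to\infty} d^{-n}(\mathcal{F}_{R,S}^{n})^*\delta_z$ for $z$ outside a small exceptional set, where $d=(\deg R-1)(\deg S-1)$ is the topological degree of $\mathcal{F}_{R,S}$. Invariance is then immediate by telescoping: $d^{-1}\mathcal{F}_{R,S}^{*}\mu_{R,S}$ is the limit of $d^{-(n+1)}(\mathcal{F}_{R,S}^{n+1})^{*}\delta_z$, which is again $\mu_{R,S}$. The weak-modularity of $\mathcal{F}_{R,S}$ established in Theorem \ref{B} lets us invoke the equidistribution machinery of \cite{DinKauWu20} to guarantee that this limit exists and is independent of the generic basepoint. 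That $\supp(\mu_{R,S})\subset\partial\Lambda_-$ follows from the description of $\Lambda_-$ in \cite{Bul00}: the hypothesis that $\mathcal{F}_{R,S}^{-1}(\interior(\Delta_R))$ is a topological disk forces $\partial\Lambda_-$ to be a Jordan curve onto which the backward orbits accumulate.

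Next I would compute the two entropies. For the upper bound $h(\mathcal{F}_{R,S})\le\log d$, I would invoke the topological entropy theory of \cite{DinSib08II}, where the entropy of a correspondence is controlled by the logarithm of its dynamical degree, here equal to $\log d$. For the matching lower bound $h_{\mu_{R,S}}(\mathcal{F}_{R,S})\ge\log d$, the topological-disk hypothesis supplies a Markov-type structure: $\mathcal{F}_{R,S}$ splits into $d$ well-defined inverse branches over $\interior(\Delta_R)$, each a biholomorphism onto its image. Partitioning $\partial\Lambda_-$ by the closures of these branches and exploiting that $\mu_{R,S}$ distributes mass uniformly among preimages (a consequence of the equidistribution) yields a generating partition whose Rokhlin entropy equals $\log d$, by an argument parallel to the one in \cite{FreLopMan83, Lju83} for rational maps.

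Combining these two bounds with the Half-Variational Principle $h_{\mu_{R,S}}(\mathcal{F}_{R,S})\le h(\mathcal{F}_{R,S})$ gives equality, hence $\mu_{R,S}$ maximizes entropy. The principal obstacle is the entropy lower bound: one must verify that the partition built from the inverse branches is genuinely generating on $\partial\Lambda_-$, ruling out accumulation of the grand orbit of the critical set on the support and handling the points where the inverse branches of the correspondence meet. These are precisely the pathologies that the topological-disk hypothesis on $\mathcal{F}_{R,S}(\interior(\Delta_S))$ and $\mathcal{F}_{R,S}^{-1}(\interior(\Delta_R))$ is designed to preclude, by forcing the branching locus of $\mathcal{F}_{R,S}$ to lie away from $\partial\Lambda_-$, so that the Markov structure is clean.
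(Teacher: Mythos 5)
Your overall squeeze --- lower-bound the metric entropy of $\mu_{R,S}$ by $\log d$ with $d=(\deg R-1)(\deg S-1)$, upper-bound the topological entropy by $\log d$ via Gromov/Dinh--Sibony, and close the gap with the Half-Variational Principle --- is exactly the paper's skeleton. But there are two genuine gaps in how you propose to execute it.

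First, your construction of $\mu_{R,S}$ as the equidistribution limit $\lim_n d^{-n}(\mathcal{F}_{R,S}^n)^*\delta_z$ via the machinery of \cite{DinKauWu20} is not available: that equidistribution theorem applies to correspondences which are \emph{not} weakly-modular, and Theorem \ref{B} establishes precisely that $\mathcal{F}_{R,S}$ \emph{is} weakly-modular. (This is the same obstruction that forced a separate argument in \cite{Mat22} for the family $\F$.) No equidistribution statement for general $(R,S)\in\mathcal{K}'$ is claimed or needed. The paper instead \emph{defines} $\mu_{R,S}=\mu^-=(h_-)^*\tilde{\mu}_{f_-}$, where $h_-$ is the conjugacy, furnished by Bullett's theorem in \cite{Bul00}, between the two-sided restriction $\mathcal{F}_{R,S}|:\Lambda_-\to\Lambda_-$ and a genuine rational map $f_-$ of degree $d$ on its filled Julia set, and $\tilde{\mu}_{f_-}$ is the Freire--Lopes--Ma\~n\'e/Lyubich measure of maximal entropy of $f_-$. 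Invariance and $\supp(\mu_{R,S})=\partial\Lambda_-$ then come for free from the conjugacy, not from a Jordan-curve argument.

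Second, your entropy lower bound proposes to build a generating Markov partition on $\partial\Lambda_-$ and redo the Rokhlin-entropy computation of \cite{FreLopMan83,Lju83} directly for the correspondence; you yourself flag the generating property and the critical orbit as the ``principal obstacle,'' and you do not resolve it. This is exactly the work the conjugacy with $f_-$ lets you avoid: since metric entropy is a conjugacy invariant, $h_{\mu^-}(\mathcal{F}_{R,S}|)=h_{\tilde{\mu}_{f_-}}(f_-)=\log d$ is a citation, and Lemma \ref{restriction} (the extension-of-measure lemma for two-sided restrictions of multivalued maps) then gives $h_{\mu^-}(\mathcal{F}_{R,S})\geq h_{\mu^-}(\mathcal{F}_{R,S}|)=\log d$. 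Without invoking Bullett's conjugacy theorem and Lemma \ref{restriction}, your proposal leaves both the definition of the measure and the hard half of the entropy estimate unproved.
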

The following is an immediate consequence.
\begin{corollary} $h_{top}(\mathcal{F}_{R,S})=\log((\deg(R)-1)(\deg(S)-1))$.
\end{corollary}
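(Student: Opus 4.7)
The plan is to combine Theorem \ref{C} with the Half-Variational Principle of \cite{VivSir22} and a direct evaluation of the metric entropy of $\mu_{R,S}$. Theorem \ref{C} asserts that $\mu_{R,S}$ maximizes the metric entropy $h_\mu(\mathcal{F}_{R,S})$ among $\mathcal{F}_{R,S}$-invariant Borel probability measures. The Half-Variational Principle then gives
\begin{equation*}
h_{\mu_{R,S}}(\mathcal{F}_{R,S}) \;\leq\; h_{top}(\mathcal{F}_{R,S}).
\end{equation*}
So it suffices to (i) compute $h_{\mu_{R,S}}(\mathcal{F}_{R,S}) = \log((\deg(R)-1)(\deg(S)-1))$, and (ii) obtain the matching upper bound on $h_{top}(\mathcal{F}_{R,S})$.

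First I would observe that the topological degree of $\covr$ is $\deg(R)-1$ (the equation $(R(z)-R(w))/(z-w)=0$ has $\deg(R)-1$ solutions in $z$ for generic $w$, and symmetrically in $w$), so the composition $\mathcal{F}_{R,S}=\covs\circ\covr$ has topological degree $d:=(\deg(R)-1)(\deg(S)-1)$ in both directions. For the upper bound on $h_{top}$, I would invoke the general inequality for holomorphic correspondences from \cite{DinSib08II}, which bounds $h_{top}$ by the logarithm of the topological degree; this yields $h_{top}(\mathcal{F}_{R,S})\leq \log d$.

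For the matching lower bound, the measure $\mu_{R,S}$ is constructed (in the style of Theorem \ref{A} and the pullback equidistribution of $\mu_+$) as the weak limit of $\frac{1}{d^n}(\mathcal{F}_{R,S}^{\,n})^*\delta_z$ on an open set containing the support. Because $\mu_{R,S}$ arises from pullbacks normalized by $d^{-n}$, one expects $h_{\mu_{R,S}}(\mathcal{F}_{R,S})\geq \log d$ by the same reasoning that gives $h_\mu = \log d$ for the Freire--Lopes--Mañé--Lyubich measure of a rational map. Concretely, I would exhibit a natural generating partition whose Jacobian with respect to $\mu_{R,S}$ is constant equal to $d$, and then apply the Rokhlin-type formula available in the framework of \cite{VivSir22}.

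The main obstacle I anticipate is this last step: verifying that the metric entropy as defined in \cite{VivSir22} (which accommodates the multi-valued nature of correspondences) really does evaluate to $\log d$ on $\mu_{R,S}$. One must be careful about how orbits branch and how the pullback operator interacts with the graph structure of $\mathcal{F}_{R,S}$, since the standard rational-map arguments pass through a single-valued dynamics. Once the equality $h_{\mu_{R,S}}(\mathcal{F}_{R,S})=\log d$ is in hand, the chain
\begin{equation*}
\log d \;=\; h_{\mu_{R,S}}(\mathcal{F}_{R,S}) \;\leq\; h_{top}(\mathcal{F}_{R,S}) \;\leq\; \log d
\end{equation*}
closes the proof.
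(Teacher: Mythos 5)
Your upper bound is essentially the paper's, though you should be careful that two different notions of topological entropy are in play: the Dinh--Sibony bound $h(\Gamma)\leq\log\max\{d_1,d_2\}$ controls $h(\Gamma)$, not the Kelly--Tennant entropy $h_{top}$ to which the Half-Variational Principle applies, so you need Lemma \ref{entropy inequality} ($h_{top}(F)\leq h(\Gamma)$) to splice the two together. That is a small omission. Also note that the corollary is stated as an immediate consequence of Theorem \ref{C}: the proof of that theorem already ends with the chain $h_{\mu^-}(\mathcal{F}_{R,S})=h_{top}(\mathcal{F}_{R,S})=\log((\deg R-1)(\deg S-1))$, so no new argument is needed.

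The genuine gap is exactly the step you flag as your ``main obstacle'': the lower bound $h_{\mu_{R,S}}(\mathcal{F}_{R,S})\geq\log d$. Your proposed route --- a generating partition with constant Jacobian $d$ plus a Rokhlin-type formula in the multivalued framework of \cite{VivSir22} --- is not available: no such formula is established there, and you do not construct the partition. You also misdescribe the measure: for general $(R,S)\in\mathcal{K}'$ the paper does not obtain $\mu_{R,S}$ as a weak limit of $d^{-n}(\mathcal{F}_{R,S}^n)^*\delta_z$ (that equidistribution is only proved for the family $\F$); rather, $\mu^-=(h_-)^*\tilde\mu_{f_-}$ is the pullback, under Bullett's conjugacy $h_-$, of the Freire--Lopes--Ma\~n\'e--Lyubich measure of the rational map $f_-$ of degree $d$ to which $\mathcal{F}_{R,S}|$ restricted to $\Lambda_-$ is conjugate. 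That conjugacy is the missing idea: it immediately gives $h_{\mu^-}(\mathcal{F}_{R,S}|)=h_{\tilde\mu_{f_-}}(f_-)=\log d$ by the classical single-valued theory, and Lemma \ref{restriction} (which compares the entropy of a two-sided restriction to an invariant subset $S$ with that of the ambient multivalued map) then yields $h_{\mu^-}(\mathcal{F}_{R,S})\geq\log d$. Without passing through this restriction-and-conjugacy step, the standard rational-map entropy computation cannot be imported, and your lower bound remains unproved.
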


The organization of the paper is as follows. In Section \ref{hol}, we introduce the notions of Holomorphic Correspondences and their actions on measures. We define as well deleted covering correspondences and prove basic properties about them. In section \ref{ent}, we prove basic properties of topological and metric entropy for correspondences. In Section \ref{com}, we define the composition of correspondences and we introduce the families  $\lbrace\F\rbrace_{a\in\mathcal{K}}$ and $\lbrace\mathcal{F}_{R,S}\rbrace_{(R,S)\in\mathcal{K}'}$ of correspondences on $\widehat{\mathbb{C}}$. In this section we define as well modularity and weak-modularity, and we prove Theorem \ref{B}. In section \ref{ent} we define limit sets of the above correspondences, and recall the rational behavior of the families in the limit sets. We finish this section by proving Theorem \ref{A} and Theorem \ref{C}.

\section{Holomorphic Correspondences}\label{hol}
Let $X$ be a compact Riemann surface. A \emph{holomorphic $1$-chain} in $X\times X$ is a formal sum $\sum_{j=1}^N n_j\Gamma_j$ of distinct irreducible complex subvarieties $\Gamma_j$ of dimension $1$, where $n_j\in\mathbb{Z}^+$. Let $\pi_i:X\times X\to X$ be the projection to the $i$-th coordinate, $i=1,2$. 

Suppose that $\restr{\pi_i}{\Gamma_j}$ is surjective, for $i=1,2$ and $j=1,\cdots, N$, and that $\pi_1^{-1}(z)\cap \Gamma_j$ is finite for all $z\in X$ and all $1\leq j\leq N$. The \emph{holomorphic correspondence $F$ given by $\Gamma$} is the multivalued map
$$F(z)\coloneqq\bigcup\limits_{j=1}^N\pi_2\left(\pi_1^{-1}(z)\cap\Gamma_j\right).$$
We say $\Gamma$ is the graph of the holomorphic correspondence $F$. The projections $\restr{\pi_i}{\Gamma_j}$ are proper, for $1\leq i\leq 2$, $1\leq j\leq N$. The correspondence is said to be $(d_1:d_2)$, where $d_1=\sum\limits_{j=1}^Nn_j\deg(\pi_2)$ and $d_2=\sum\limits_{j=1}^Nn_j\deg(\pi_1)$. That is, $d_1$ is the number of images of a point, and $d_2$ is the number of pre-images of a point, both counted with multiplicity. 

\begin{remark} Since $X$ is compact, then $\pi_2:X\times X\to X$ is closed. Moreover, $\pi_1$ is continuous and $\Gamma\subset X\times X$ is closed. Hence, for ever closed set $C\subset X$, we have that
$$F(C)=\pi_2(\pi_1^{-1}(C)\cap\Gamma)$$
is closed.
\end{remark}


Let $F_1$ and $F_2$ be two holomorphic correspondences on a compact Riemann surface $X$, whose graphs are $\Gamma_{F_1}$ and $\Gamma_{F_2}$, respectively. Let $\Gamma_{F_1\circ F_2}$ be the projection of $(\Gamma_{F_2}\times\Gamma_{F_1})\cap\lbrace z_2=z_3\rbrace$, to the first and fourth coordinates (counting multiplicities). The composition of $F_1$ and $F_2$ is the holomorphic correspondence $F_1\circ F_2$ whose graph is $\Gamma_{F_1\circ F_2}$.
\\
In terms of multivalued maps,
$$F_1\circ F_2(z)=\bigcup_{w\in F_2(z)}F_1(w).$$

\begin{definition} For a rational map $R:\widehat{\mathbb{C}}\to\widehat{\mathbb{C}}$ of degree $\deg{R}=r$, we define its \emph{deleted covering correspondence} $\covr$ on $\widehat{\mathbb{C}}$ to be the $(r-1:r-1)$ correspondence whose graph is the curve
$$\Gamma_R\coloneqq\overline{\lbrace (z,w)\in(\widehat{\mathbb{C}}\times\widehat{\mathbb{C}})\smallsetminus\mathfrak{D}_{\widehat{\mathbb{C}}}: R(z)=R(w)\rbrace},$$
where $\mathfrak{D}_{\widehat{\mathbb{C}}}\coloneqq\lbrace (z,z):z\in\widehat{\mathbb{C}}\rbrace$ is the diagonal in $\widehat{\mathbb{C}}\times\widehat{\mathbb{C}}$, and the closure is taken in $\widehat{\mathbb{C}}\times\widehat{\mathbb{C}}$.
\end{definition}

\begin{remark} It is clear from the definition that $\Gamma_R$ is symmetric with respect to the diagonal $\mathfrak{D}_{\widehat{\mathbb{C}}}=\lbrace(z,z)|z\in\widehat{\mathbb{C}}\rbrace$ of $\widehat{\mathbb{C}}\times\widehat{\mathbb{C}}$.

Another useful way to think of $\Gamma_R$ is as the zero set of polynomial in two variables
$$P(z,w)=\frac{R(z)-R(w)}{z-w}.$$
That is, $\covr$ relates pre-images under $R$, deleting the obvious relation of $z$ with itself. Note that it is still possible for some $(z,z)$ to belong to $\Gamma$.
\end{remark}



\begin{proposition}\label{inv} 
On $\widehat{\mathbb{C}}$, every involution other than the identity is the deleted covering correspondence of some quadratic rational map, and vice versa. Moreover, the involution fixes infinity if and only if the quadratic map is a polynomial.
\end{proposition}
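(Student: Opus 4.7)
The plan is to prove the two directions separately and then address the ``moreover'' clause.

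For the first direction (every quadratic $R$ has $\covr$ an involution), I would use a degree count: since $\deg R = 2$, the correspondence $\covr$ is $(1:1)$, hence single-valued on a Zariski-open set, and by the symmetry of $\Gamma_R$ across the diagonal it is actually an automorphism of $\widehat{\mathbb{C}}$, so a Möbius transformation $J$. For generic $z$, the fiber $R^{-1}(R(z))$ is $\{z, J(z)\}$, and applying $J$ again must give back the second preimage, so $J(J(z)) = z$ on a dense set, hence everywhere. Since $J$ is not the identity (the generic fiber of $R$ has two distinct points), $J$ is a non-trivial involution and thus has the stated form.

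For the converse, given a non-trivial involution $J$, I would build a quadratic $R$ with $R\circ J = R$. The cleanest way is to use the two fixed points $p_1,p_2$ of $J$ (these exist because any non-identity Möbius involution is conjugate to $z\mapsto -z$): pick a Möbius transformation $\phi$ with $\phi(p_1)=0$, $\phi(p_2)=\infty$, so that $\phi\circ J\circ \phi^{-1}$ is an involution fixing $0$ and $\infty$, hence equals $z\mapsto -z$. Then set $R(z) = \phi(z)^2$. A direct computation gives $R\circ J = R$, and $R$ has degree $2$. It remains to check $J = \covr$: the graph of $J$ is an irreducible curve contained in $\Gamma_R$ (since $R\circ J = R$ and $J\neq \mathrm{id}$ so pairs $(z,J(z))$ with $z\neq J(z)$ are dense), and $\Gamma_R$ is itself irreducible of dimension $1$, so the two curves coincide.

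For the polynomial characterization, the forward direction is a routine computation: if $R(z)=\alpha z^2+\beta z+\gamma$ with $\alpha\neq 0$, then
\[
\frac{R(z)-R(w)}{z-w} = \alpha(z+w)+\beta,
\]
so $\covr(z) = -z - \beta/\alpha$ is an affine involution, which fixes $\infty$. Conversely, if $J$ is an involution fixing $\infty$, then $\infty$ is one of its two fixed points, say $p_2=\infty$. In the construction above, I am free to choose $\phi$ with $\phi(p_2)=\infty$, so $\phi$ is affine, and then $R = \phi^2$ is a quadratic polynomial. The main subtlety to be careful about is verifying that the $R$ I build really gives back the prescribed $J$ as its deleted covering correspondence (rather than some other involution), and noting that the construction of $R$ is not unique as a rational map but the resulting $\covr$ is; this is handled by the irreducibility argument above.
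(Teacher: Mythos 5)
Your proof is correct, but it takes a genuinely different route from the paper's. The paper argues by brute-force computation: writing $R(z)=\frac{az^2+bz+c}{dz^2+ez+f}$ and expanding $\frac{R(z)-R(w)}{z-w}$, it reads off the explicit formula $\covr(z)=\frac{(cd-af)z+(ce-bf)}{(ae-bd)z-(cd-af)}$, which is visibly a trace-zero M\"obius map and hence an involution; the converse and the polynomial clause are then handled by solving for the coefficients $a,\dots,f$ (and by the separate computation $\covp(z)=-z-b/a$ for polynomials). You instead argue structurally: the degree count makes $\covr$ a $(1{:}1)$ correspondence, hence a M\"obius map $J$, and the fact that $J(z)$ is the ``other'' preimage of $R(z)$ forces $J\circ J=\mathrm{id}$ (note it is this fiber argument, not the symmetry of $\Gamma_R$ per se, that does the work there); for the converse you conjugate $J$ to $z\mapsto -z$ and take $R=\phi^2$. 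Your worry about whether the constructed $R$ returns the prescribed $J$ is the right one, and your containment-of-curves argument settles it; the cleanest justification is that the graph of $J$ and $\Gamma_R$ both have bidegree $(1,1)$ in $\mathbb{P}^1\times\mathbb{P}^1$, so the containment forces equality (this also yields the irreducibility of $\Gamma_R$ rather than assuming it). The trade-off between the two approaches: yours is coordinate-free and explains why the statement is true, while the paper's computation produces the explicit dictionary between the coefficients of $J$ and those of $R$, which is what is used later in Section 2 to exhibit $\J$ as $\covr$ for $R(z)=\frac{z^2-a}{(z-1)^2}$.
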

\begin{proof}
Put $R(z)=\frac{az^2+bz+c}{dz^2+ez+f}$ of degree $2$, and note that for $z,w\notin R^{-1}(\infty)$, $z\neq w$,
$$\frac{R(z)-R(w)}{z-w}=\frac{(ae-bd)zw+(af-cd)(z+w)+(bf-ce)}{(dz^2+ez+f)(dw^2+ew+f)}.$$
Since neither $(dz^2+ez+f)$ or $(dw^2+ew+f)$ vanish,
$$\covr(z)=\overline{\lbrace w|(ae-bd)zw+(af-cd)(z+w)+(bf-ce)=0\rbrace}.$$
This yields that $\covr(z)=\frac{(cd-af)z+(ce-bf)}{(ae-bd)z-(cd-af)}$. This equality extends to the closure. Therefore, $\covr$ is an involution different from the identity.

On the other hand, for $J(z)=\frac{Az+B}{Cz-A}$ be an involution other than the identity. Making $a,b,c,d,e,f$ be so that $cd-af=A$, $ce-bf=B$ and $ae-bd=C$ we get that $J=\covr$ for $R(z)$ as above.
For instance, if $A\neq 0$, we take $a=1, b=\frac{B}{A},c=0,d=1,e=C+\frac{B}{A},f=-A$, and if $A=0$ we take $a=0, b=1, c=0, d=-C, e=0,f=-B$. 

Now observe that for a polynomial $P(z)=az^2+bz+c$ with $a\neq 0$, we have that $\frac{P(z)-P(w)}{z-w}
=a(z+w)+b,$
which implies that
$$\covp(z)=\lbrace w : az+aw+b=0\rbrace=\left\lbrace\frac{1}{a}(-az-b)\right\rbrace=\left\lbrace -z-\frac{b}{a}\right\rbrace.$$
By the symmetry of $\covp$, it is an involution. Moreover, $\covp$ clearly fixes $\infty$. If $J(z)=\frac{Az+B}{Cz-A}$ be an involution other than the identity that fixes infinity. Then $C=0$ and
$$J(z)=-z+\frac{B}{A}.$$
Taking $P(z)=Az^2+Bz$ we get that $\covp(z)=J(z)$.
\end{proof}

\begin{definition}\label{crit} 
Let $\Gamma=\sum\limits_{i=1}^N n_i\Gamma(i)$ be the graph of a holomorphic correspondence $F$ on a compact Riemann surface $X$. We denote
$$A_j(\Gamma)\coloneqq\bigcup_iA_j(\Gamma(i)),$$
where each $A_j(\Gamma(i))$ is defined to be
$$A_j(\Gamma)\coloneqq\left\lbrace\alpha\in\Gamma:\mbox{ for all open neighborhoods  }W\mbox{ of }\alpha,\restr{\pi_j}{W\cap\Gamma}\mbox{ is not injective}\right\rbrace.$$
We call $A_2(\Gamma)$ (respectively, $A_1(\Gamma)$) the set of \emph{ramification points} of $F$ (respectively, of $F^{-1}$). We define as well $B_j(\Gamma)=\pi_j(A_j(\Gamma))$, and we call $B_2(\Gamma)$ (respectively, $B_1(\Gamma)$) the set of \emph{critical values} of $F$ (respectively, $F^{-1}$).
\end{definition}

Denote by $\critpt(R)$ the set of critical points of the rational map $R$. We have the following.
\begin{proposition}\label{critical} 
Let $R$ be a rational map on $\widehat{\mathbb{C}}$ with degree $\deg(R)\geq 2$, and let $\Gamma_R$ be the graph of the deleted covering correspondence associated to $R$. Then $\covr$ sends open sets to open sets.
\end{proposition}
\begin{proof}
Using implicit differentiation on the equation $\frac{R(z)-R(w)}{z-w}=0$ we get that
$$\frac{dw}{dz}=\frac{(R(z)-R(w))-R'(z)(z-w)}{(R(z)-R(w))-R'(w)(z-w)}=\frac{R'(z)}{R'(w)}.$$

Let $U\subset\widehat{\mathbb{C}}$ be open and put $(z_0,w_0)\in\pi_1^{-1}(U)\cap\Gamma_R$. We will show that $w_0\in\interior(\covr(U))$. The case where $(z_0,w_0)\notin A_1(\Gamma_R)\cap A_2(\Gamma_R)$ is identical to the case where $R(z)=z^3-3z$ (see \cite[Remark 2.6]{Mat23}). Now let $(z_0,w_0)$ belong to $A_1(\Gamma_R)\cap A_2(\Gamma_R)$. Then both $z_0$ and $w_0$ are critical points of $R$. Let $f(z,w)=\frac{R(z)-R(w)}{z-w}$ and note that
$$\frac{\partial f}{\partial z}=\frac{(R'(z)-R'(w)\frac{dw}{dz})(z-w)-(R(z)-R(w))(1-\frac{dw}{dz})}{(z-w)^2}=-\frac{R(z)-R(w)}{(z-w)^2}.$$
Therefore $\frac{\partial f}{\partial z}(z_0,w_0)=0$, and similarly, $\frac{\partial f}{\partial w}(z_0,w_0)=0$. By the definition of $\Gamma_R$, we have that $(z_0,w_0)$ is a singular point of $\Gamma_R$.

By composing with a translation $t$, we can suppose that the singular point is the origin replacing $R$ by $R_\tau$. We blow-up $\mathbb{P}^1\times\mathbb{P}^1$ at the singularity $(0,0)$ by taking
$$\blow_{(0,0)}(\mathbb{P}^1\times\mathbb{P}^1)\coloneqq\lbrace (z,w,(Z:W))\in (\mathbb{P}^1\times\mathbb{P}^1)\times\mathbb{P}^1 |zW=wZ\rbrace$$
and $\beta:\blow_{(0,0)}(\mathbb{P}^1 \times\mathbb{P}^1)\to\mathbb{P}^1\times\mathbb{P}^1$ be the projection given by $(z,w,(Z:W))\mapsto (z,w)$. 

Put $\widetilde{\Gamma_{R_\tau}}\coloneqq\overline{\beta^{-1}(\Gamma_{R_\tau}\smallsetminus\lbrace (0,0)\rbrace)}$ and note that $\beta^{-1}(\Gamma_{R_\tau}\smallsetminus\lbrace(0,0)\rbrace)=\widetilde{\Gamma_{R_\tau}}\smallsetminus\beta^{-1}(0,0)$.


Let $p_j:(\mathbb{P}^1\times\mathbb{P}^1)\times\mathbb{P}^1\to\mathbb{P}^1$ be the projection to the $j$-th coordinate for $j=1,2$, and let $U\subset\mathbb{P}^1$ be an open neighborhood of $z=0$. We have that $\pi_1\circ\beta=p_1$ and
$$\beta^{-1}(\pi_1^{-1}(U)\cap(\Gamma_{R_\tau}\smallsetminus\lbrace(0,0)\rbrace))=\pi_1^{-1}(U)\cap(\widetilde{\Gamma_{R_\tau}}\smallsetminus\beta^{-1}(0,0)).$$

Similarly, we have that $p_2=\pi_2\circ\beta$. We proceed to show that $\pi_2(\pi_1^{-1}(U)\cap\Gamma_{R_\tau})$ is open. Indeed, 

\begin{eqnarray*}
\pi_2(\pi_1^{-1}(U)\cap\Gamma_{R_\tau})&=& p_2\circ\beta^{-1}(\beta(p_1^{-1}(U)\cap (\widetilde{\Gamma_{R_\tau}}\smallsetminus\beta^{-1}(0,0)))\cup (\beta\circ p_1^{-1}(0)\cap\widetilde{\Gamma_{R_\tau}}))\\
&=&p_2((p_1^{-1}(U)\cap(\widetilde{\Gamma_{R_\tau}}\smallsetminus\beta^{-1}(0,0))\cup\beta^{-1}(0,0)))\\
&=&p_2(p_1^{-1}(U)\cap\widetilde{\Gamma_{R_\tau}}).
\end{eqnarray*}

After finitely many blow-ups, the proper transform of the curve is non-singular. Suppose without of generality that it happens after the first blow-up, meaning that $\widetilde{\Gamma_{R_\tau}}$ is non-singular. Observe that the translation does not affect whether or not a point is singular. Let 
$$f_1(z,w,(Z:W))\coloneqq zW-Zw\hspace{1cm}\mbox{ and }\hspace{1cm}f_2(z,w,(Z:W))\coloneqq\frac{R_\tau(z)-R_\tau(w)}{z-w}.$$

Let $u$ be an affine coordinate of $(Z:W)$ on a neighborhood of a point $(0,0,u(0,0))\in\beta^{-1}(0,0)\cap\widetilde{\Gamma_{R_\tau}}$. There are no solutions to 
$$\frac{\partial f_1}{\partial z}=\frac{\partial f_1}{\partial w}=\frac{\partial f_1}{\partial u}=0.$$ 
Since $\widetilde{\Gamma_{R_\tau}}$ is non-singular and $\frac{\partial f_2}{\partial u}=0$, this implies that either $\frac{\partial f_2}{\partial z}\neq 0$ or $\frac{\partial f_2}{\partial w}\neq 0$.

We prove by contrapositive that this implies that either $\frac{dw}{dz}\neq 0$ or $\frac{dz}{dw}\neq 0$. Indeed, note that
$$\frac{\partial f_2}{\partial z}=\frac{(R'_\tau(z)-R'_\tau(w)\frac{dw}{dz})(z-w)-(R_\tau(z)-R_\tau(w))(1-\frac{dw}{dz})}{(z-w)^2}.$$
From Proposition \ref{critical} we have that if $\frac{dw}{dz}=\frac{dz}{dw}=0$, then $R'_\tau(z)=R'_\tau(w)=0$. Thus,
$$\frac{\partial f_2}{\partial z}=-\frac{R_\tau(z)-R_\tau(w)}{(z-w)^2}=0\hspace{1cm}\mbox{ and }\hspace{1cm}\frac{\partial f_2}{\partial z}=\frac{R_\tau(z)-R_\tau(w)}{(z-w)^2}=0.$$
Suppose without loss of generality that $\frac{dw}{dz}\neq 0$. The Implicit Function Theorem gives the existence of a neighborhood $\Omega$ of $0$ and a holomorphic function $g:\Omega\to\widehat{\mathbb{C}}$ so that the graph $\widetilde{\Gamma_{R_\tau}}$ is locally given by $\lbrace (z,w,u(z,w)):w=g(z)\rbrace$ at $(0,0,u(0,0))$. Since $\Gamma_{R_\tau}$ contains no lines, then $g$ is not constant, and hence it is open. Therefore, $(z_0,w_0,u(z_0,w_0))\in\interior(p_2(p_1^{-1}(U)\cap\widetilde{\Gamma_{R_\tau}}))$. We have that $\pi_2(\pi_1^{-1}(U)\cap\Gamma_{R_\tau})$ is open, as desired, and then so is $\covr$.
\end{proof}

\section{Entropy}\label{ent}
\subsection{Topological Entropy}

Let $F:X\to 2^X$ be a multivalued map on a compact metric space $(X,\operatorname{dist})$. For $x\in X$ and $n\in\mathbb{N}$, we define
$$\orb_n(F)=\lbrace (x_0,x_1,\cdots,x_{n-1})\in X^n :x_i\in F(x_{i-1})\mbox{ for all }1\leq i\leq n-1\rbrace.$$
We say that $A\subset\orb_n(F)$ is \emph{$\epsilon$-separated in the sense of Kelly-Tennant} if for every two distinct map $n$-orbits $(x_0,x_1,\cdots,x_{n-1}),(y_0,y_1,\cdots,y_{n-1})\in A$, there exists $0\leq i\leq n-1$ so that $\operatorname{dist}(x_i,y_i)\geq\epsilon$. Given $n\in\mathbb{N}$ and $\epsilon>0$, we define $s_{n,\epsilon}(F)$ to be the largest cardinality of an $\epsilon$-separated set collection of map $n$-orbits.
In \cite{KelTen17}, they defined \emph{map topological entropy} of the multivalued map $F$ as the quantity
$$h_{top}(F)\coloneqq\lim\limits_{\epsilon\to 0}\limsup\limits_{n\to\infty}\frac{1}{n}\log(s_{n,\epsilon}(F)).$$
The multivalued map $F$ is said to be  \emph{surjective} if for all $y\in X$, there exists $x\in X$ so that $y\in F(x)$. They prove that whenever $F$ is surjective, we have that $h_{top}(F^{-1})=h_{top}(F)$, where $F^{-1}$ is the multivalued map assigning $F^{-1}(y)=\lbrace x\in X: y\in F(x)\rbrace$.

In the case of a holomorphic correspondence $F$ with graph $\Gamma$ on a compact Riemann surface $X$, Dinh-Sibony \cite{DinSib08II} defined topological entropy as follows. Write $\Gamma=\sum_{j=1}^N\Gamma_j$ for the graph of $F$, where the $\Gamma_j 's$ are allowed to be repeated. For each $n\in\mathbb{N}$, a correspondence $n$-orbit is a $(2n+1)$-tuple
$$(x_0,x_1,\cdots,x_n;j_1,j_2,\cdots,j_n)\in X^{n+1}\times\lbrace 1,\cdots,N\rbrace^n$$
satisfying that for each $1\leq i\leq n$, $(x_{i-1},x_{i})\in\Gamma_{j_i}$. Put $\epsilon >0$. We say that a collection $\mathcal{A}$ of correspondence $n$-orbits is \emph{$\epsilon$-separated in the sense of Dinh-Sibony} if for all 
$$(x_0,x_1,\cdots,x_n;j_1,j_2,\cdots,j_n), (y_0,y_1,\cdots,y_n;k_1,k_2,\cdots,k_n)\in\mathcal{A},$$
either $\operatorname{dist}(x_i,y_i)>\epsilon\mbox{ for some }0\leq i\leq n$, or $j_i\neq k_i$ for some $1\leq i\leq n$, where $\operatorname{dist}(\cdot,\cdot)$ denotes a distance on $X$. We denote $$N_{\Gamma}(\epsilon,n)=\max\lbrace|\mathcal{A}|: \mathcal{A}\mbox{ is an }\epsilon\mbox{-separated collection of corresp. } n\mbox{-orbits}\rbrace.$$

\begin{definition} The \emph{correspondence topological entropy} of $\Gamma$ is defined to be the number
$$h(\Gamma)\coloneqq\sup\limits_{\epsilon >0}\limsup\limits_{n\to\infty}\frac{1}{n}\log N_{\Gamma}(\epsilon,n).$$
\end{definition}

By compactness of $X$, every $\epsilon$-separated family of correspondence $n$-orbits is finite. Moreover, $h(\Gamma)$ does not depend on the choice of $\operatorname{dist}(\cdot,\cdot)$.


The following result is from \cite{DinSib08II}, generalizing Gromov's result in \cite{Gro03}.

\begin{theorem}[Gromov's Inequality]\label{bound} 
Let $X$ be a compact Riemann surface and let $\Gamma$ be the graph of a holomorphic correspondence $F$ on $X$. Then
$$h(\Gamma)\leq\log\max\lbrace d_1(F),d_2(F)\rbrace.$$
\end{theorem}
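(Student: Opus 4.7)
My approach follows Dinh and Sibony's adaptation of Gromov's volume-growth argument for topological entropy of correspondences. Fix a K\"ahler form $\omega$ on $X$ with $C_0=\int_X\omega$, set $d=\max\{d_1(F),d_2(F)\}$, and for each $n\geq 1$ I would form the iterated graph
$$\Gamma^{(n)}\;=\;\bigcup_{(j_1,\ldots,j_n)\in\{1,\ldots,N\}^n}\Gamma_{j_1}\times_X\Gamma_{j_2}\times_X\cdots\times_X\Gamma_{j_n}\;\subset\; X^{n+1},$$
where the fiber products use the second projection of each left factor and the first projection of the adjacent right factor. Points of the branch-$(j_1,\ldots,j_n)$ component are exactly the Dinh-Sibony $n$-orbits $(x_0,\ldots,x_n;j_1,\ldots,j_n)$. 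I equip $X^{n+1}$ with the product K\"ahler form $\omega^{(n)}=\sum_{i=0}^n\pi_i^{\ast}\omega$, so that $\omega^{(n)}$-distance dominates every coordinate-wise $\omega$-distance and controls $\epsilon$-separation of orbits.

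The key volume bound is that for each $i\in\{0,\ldots,n\}$, the projection $p_i:\Gamma^{(n)}\to X$ onto the $i$-th factor has degree at most $d^n$, since a generic $i$-th coordinate admits at most $d^i$ backward $i$-step continuations and at most $d^{n-i}$ forward $(n-i)$-step continuations. Consequently,
$$\operatorname{Vol}_{\omega^{(n)}}(\Gamma^{(n)})\;=\;\int_{\Gamma^{(n)}}\omega^{(n)}\;=\;C_0\sum_{i=0}^n\deg(p_i)\;\leq\;C_0(n+1)d^n.$$
Next I would invoke the Bishop-Lelong inequality: for any one-dimensional complex subvariety $V$ of a Hermitian manifold and any $p\in V$, $\operatorname{Vol}(V\cap B(p,r))\geq c\,r^2$ for $r$ smaller than a uniform radius, with $c>0$ depending only on curvature bounds. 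Given an $\epsilon$-separated family $\mathcal{A}$ of $n$-orbits, the corresponding points in $\Gamma^{(n)}$ split across components by branch labels, and within each component they are pairwise $\epsilon$-separated in some coordinate, hence in the $\omega^{(n)}$-metric. Summing Lelong's lower bounds over the pairwise disjoint $(\epsilon/2)$-balls around them yields
$$N_{\Gamma}(\epsilon,n)\;\leq\;\frac{\operatorname{Vol}_{\omega^{(n)}}(\Gamma^{(n)})}{c(\epsilon/2)^2}\;\leq\;\frac{4C_0(n+1)d^n}{c\,\epsilon^2},$$
and then taking $\frac{1}{n}\log$, letting $n\to\infty$, and finally taking $\sup_\epsilon$ gives $h(\Gamma)\leq\log d$.

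The main obstacle will be ensuring that the Bishop-Lelong constant $c$ can be chosen uniformly in the ambient complex dimension $n+1$. Because $\omega^{(n)}$ is an orthogonal sum of copies of the fixed K\"ahler form $\omega$ on $X$, any coordinate patch in $X^{n+1}$ of diameter less than the injectivity radius of $(X,\omega)$ decomposes isometrically as a product of patches of $X$, so the flat Lelong monotonicity inequality in $\mathbb{C}^{n+1}$ with its universal constant $\pi$ applies locally. A curvature comparison controls the correction in terms of the curvature of $\omega$ alone, so $c$ may be chosen independently of $n$; the resulting $\frac{\log(n+1)}{n}$ correction in the entropy estimate vanishes in the limit, and the bound $h(\Gamma)\leq\log\max\{d_1(F),d_2(F)\}$ follows.
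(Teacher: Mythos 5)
Your argument is the standard Gromov volume-growth proof---iterated graphs in $X^{n+1}$, the degree bound $\deg(p_i)=d_2^{\,i}d_1^{\,n-i}\le d^{\,n}$ for the coordinate projections, and the Lelong lower bound with a constant uniform in the ambient dimension---which is precisely the route taken in \cite{DinSib08II}, the reference the paper cites for this theorem without reproducing a proof. The sketch is correct, and it handles the two points that genuinely need care: orbits with distinct branch labels live on separate components of the cycle $\Gamma^{(n)}$, whose volumes are summed with multiplicity in the degree computation, and the Lelong constant for one-dimensional analytic sets does not degenerate as the ambient dimension $n+1$ grows.
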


Observe that both $h_{top}$ and $h$ generalize the topological entropy of single-valued maps. Furthermore, holomorphic correspondences always induce multivalued maps, and we have the following lemma.

\begin{lemma}\label{entropy inequality}
Let $X$ be a compact Riemann surface and let $F$ be a holomorphic correspondence on $X$ with graph $\Gamma$. Then $h_{top}(F)\leq h(\Gamma)$.
\end{lemma}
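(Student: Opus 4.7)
The plan is to convert each $\epsilon$-separated family of Kelly--Tennant $n$-orbits into a Dinh--Sibony $\epsilon'$-separated family of $(n-1)$-orbits, for any $\epsilon' < \epsilon$. A Kelly--Tennant $n$-orbit records only a point-tuple $(x_0,\ldots,x_{n-1})$ with $x_i \in F(x_{i-1})$, whereas a Dinh--Sibony $(n-1)$-orbit records in addition a choice of irreducible component $\Gamma_{j_i}$ through each consecutive pair $(x_{i-1},x_i)$. Since $x_i\in F(x_{i-1})$ forces $(x_{i-1},x_i)\in\Gamma$, at least one such index $j_i\in\{1,\ldots,N\}$ exists, so I can lift every Kelly--Tennant $n$-orbit to a Dinh--Sibony $(n-1)$-orbit by any such choice of indices.

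I then verify that this lifting is injective and preserves separation. Distinct Kelly--Tennant orbits are distinct already at the level of point-tuples, so their lifts remain distinct regardless of the chosen indices; and if two Kelly--Tennant orbits satisfy $\operatorname{dist}(x_i,y_i)\geq \epsilon$ for some $i$, then the same coordinates of their lifts satisfy $\operatorname{dist}(x_i,y_i) > \epsilon'$ for any $\epsilon' < \epsilon$, which already suffices for Dinh--Sibony $\epsilon'$-separation. Hence $s_{n,\epsilon}(F)\leq N_\Gamma(\epsilon',n-1)$ whenever $\epsilon' < \epsilon$. Taking $\frac{1}{n}\log$, applying $\limsup_{n\to\infty}$ (the shift from $n$ to $n-1$ is absorbed in the limit), and then letting $\epsilon\to 0$ with $\epsilon'<\epsilon$ chosen accordingly, I obtain $h_{top}(F)\leq h(\Gamma)$, since the $\sup_{\epsilon>0}$ defining $h(\Gamma)$ coincides with $\lim_{\epsilon\to 0}$ by monotonicity of $N_\Gamma(\cdot,n)$ in $\epsilon$.

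The main obstacle here is purely one of bookkeeping: matching the conventions (strict versus non-strict inequality in the two notions of $\epsilon$-separation, and the off-by-one in the length of an $n$-orbit between the two frameworks). No deeper argument is required, and in particular the analytic structure of the correspondence plays no role here---the inequality follows from the elementary observation that a Dinh--Sibony orbit carries strictly more distinguishing data than a Kelly--Tennant orbit.
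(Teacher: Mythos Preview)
Your proof is correct and follows essentially the same approach as the paper: lift each Kelly--Tennant $n$-orbit to a Dinh--Sibony $(n-1)$-orbit by choosing component indices, observe that separation at the point-tuple level already gives Dinh--Sibony separation (with the $\epsilon'<\epsilon$ adjustment for strict versus non-strict inequality), and absorb the index shift in the $\limsup$. Your handling of the $\epsilon'\to\epsilon$ passage via monotonicity is in fact slightly cleaner than the paper's version.
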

\begin{proof}
Put $\Gamma=\sum_{i=1}^m n_i\Gamma_i$. Note that if $(x_0,x_1,\cdots,x_{n-1})\in\orb_n(F)$, then there exists $(j_1,\cdots,j_{n-1})\in\lbrace 1,\cdots, m\rbrace^{n-1}$ so that 
$$(x_0,x_1,\cdots,x_{n-1};j_1,\cdots,j_{n-1})$$
satisfies that for each $1\leq i\leq n-1$, $(x_{i-1},x_i)\in\Gamma_{j_i}$. Thus, if $A\subset\orb_n(F)$ is $\epsilon$-separated, then
$$\mathcal{A}=\lbrace (x_0,x_1,\cdots,x_{n-1};j_1,\cdots,j_{n-1}):(x_0,x_1,\cdots,x_{n-1})\in A\rbrace$$
obtained this way, is a $\delta$-separated collection of correspondence $(n-1)$-orbits, for all $\delta<\epsilon$. Therefore, $s_{n,\epsilon}(F)\leq N_{\Gamma}(n-1,\delta)$, for all $\delta<\epsilon$. Now observe that $s_{n,\delta}$ is decreasing with $\delta$. Taking limit as $\delta\to\epsilon$, we get that 
$$s_{n,\epsilon}(F)\leq N_{\Gamma}(n-1,\epsilon)$$. Moreover,
\begin{eqnarray*}
h_{top}&=&\lim\limits_{\epsilon\to 0}\limsup\limits_{n\to\infty}\frac{1}{n}\log(s_{n,\epsilon}(F))\\
&=&\sup\limits_{\epsilon >0}\limsup_{n\to\infty}\frac{1}{n}\log(s_{n,\epsilon}(F))\\
&\leq&\sup\limits_{\epsilon >0}\limsup\limits_{n\to\infty}\frac{1}{n}\log(N_{\Gamma}(n-1,\epsilon))\\
&=&\sup\limits_{\epsilon >0}\limsup\limits_{n\to\infty}\frac{n-1}{n}\frac{1}{n-1}\log(N_{\Gamma}(n-1,\epsilon))\\
&=&h(\Gamma_a).
\end{eqnarray*}
\end{proof}

Dinh-Sibony's proof of Gromov's inequality works for meromorphic correspondences on compact K\"ahler manifolds, and so does the proof of this Lemma. For this paper, we restrict ourselves to Riemann surfaces since our main motivation is compositions of covering correspondences that act on the Riemann sphere.

\subsection{Metric Entropy} We proceed to define the metric entropy in the sense of \cite{VivSir22}, and state their Half-Variational Principle. 

Let $X$ be a compact metric space. Let $F$ be a multivalued map on $X$ that satisfies that $F(C)$ is closed for every closed set $C$ and let $\mu$ be a Borel probability measure on $X$. We say that $\mu$ is \emph{$F$-invariant} if $\mu(F^{-1}(A))\geq\mu(A)$, for all $A\in\mathcal{B}$. We denote by $\mathcal{M}_1(F)$ the collection of all such measures. For a finite ordered partition $\mathcal{P}=\lbrace P_1,\cdots,P_k\rbrace$ of $X$, the \emph{entropy of $\mathcal{P}$} is defined by
$$H_\mu(\mathcal{P})\coloneqq\sum\limits_{P\in\mathcal{P}}-\mu(P)\log\mu(P).$$

Now, let $\widetilde{\mathcal{P}}_n$ be the finite ordered partition given by
$$P_{1,n}\coloneqq F^{-n}(P_1),\mbox{ and }P_{j,n}\coloneqq F^{-n}(P_j)\setminus\bigcup\limits_{\ell<j}F^{-n}(P_{\ell}),\hspace{.5cm}2\leq j\leq k.$$
The \emph{metric entropy of the system $(F,\mu)$} is defined as
$$h_\mu(F)\coloneqq\sup\limits_{\mathcal{P}}\limsup_{N\to\infty}\frac{1}{N}H_{\mu}\left(\bigvee\limits_{n=0}^{N-1}\widetilde{\mathcal{P}}_n\right)$$
where the supremum is taken over all the finite totally ordered partitions. This definition restricts to the usual notion of metric entropy for maps. We also have several of the properties that entropies for maps have. For instance, let $F$ be so that $F(x)$ is closed and nonempty for all $x$, then for all $k\geq 1$ and all $\mu\in\mathcal{M}_1(F)$, we have that
$$h_{\mu}(F^k)\geq k h_{\mu}(F).$$
There is also a Half-Variational Principle, which is the main result in \cite{VivSir22}.

\begin{theorem}[Half-Variational Principle]
Let $X$ be a compact metric space $X$ and let $F:X\to 2^X$ be a multivalued map on $X$ with the property that $F(C)\subset X$ is closed whenever $C\subset X$ is closed. Then,
$$h_{top}(F)\geq\sup\limits_{\mu\in\mathcal{M}_1(F)}h_\mu(F).$$
\end{theorem}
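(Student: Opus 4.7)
The plan is to adapt Goodwyn's proof of one direction of the classical variational principle to the multivalued setting. Fix $\mu \in \mathcal{M}_1(F)$ and a finite ordered partition $\mathcal{P} = \{P_1, \ldots, P_k\}$; it suffices to show $h_{top}(F) \geq h_\mu(F, \mathcal{P})$ and then take suprema over $\mathcal{P}$ and $\mu$. By inner regularity of $\mu$ on the compact metric space $X$, for any prescribed $\eta > 0$ I first pick compact cores $C_j \subset P_j$ with $\mu(P_j \setminus C_j) < \eta/k$. These are pairwise disjoint compact subsets of a compact metric space, so $\epsilon_0 \coloneqq \min_{i \neq j}\dist(C_i, C_j) > 0$, and this will serve as the separation scale for the family of orbits I am going to construct.

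The core of the argument is to associate, for each $N$, to $\mu$-most atoms $A$ of $\mathcal{P}'_N$ with coding $(j_0, \ldots, j_{N-1})$ an $N$-orbit $(x_0, \ldots, x_{N-1}) \in \orb_N(F)$ with $x_0 \in A$ and $x_n \in C_{j_n}$ for every $n$. Once this assignment is carried out, orbits arising from atoms with distinct codings necessarily differ at some coordinate $n$ by at least $\epsilon_0$, so the selected family is $\epsilon_0$-separated in the sense of Kelly--Tennant. Combining with the elementary bound $H_\mu(\mathcal{P}'_N) \leq \log|\{A : \mu(A) > 0\}|$ gives $s_{N, \epsilon_0}(F) \geq \exp(H_\mu(\mathcal{P}'_N) - O(\eta N))$; dividing by $N$, passing to the $\limsup$ as $N\to\infty$, and then letting $\eta \to 0$ yields $h_{top}(F) \geq h_\mu(F, \mathcal{P})$.

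The main obstacle lies in the orbit selection. Because $\widetilde{\mathcal{P}}_n$ is defined via the set-valued preimage $F^{-n}$, the coding of an atom $A$ only records, for each $n$ separately, the existence of some orbit of $x \in A$ whose $n$-th position lies in $P_{j_n}$; nothing in the definition guarantees that a single orbit simultaneously realizes all of these positions. My plan to resolve this is to lift $\mu$ to a probability measure $\tilde\mu$ on the space $\orb_\infty(F) \subset X^{\mathbb{N}}$ of infinite $F$-orbits that is shift-invariant and projects to $\mu$ on the first coordinate; the $F$-invariance hypothesis $\mu(F^{-1}(A)) \geq \mu(A)$, combined with a measurable selection of Markov kernels supported on $F(x)$ together with a Kolmogorov-type extension, produces such a lift. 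The metric entropy $h_\mu(F, \mathcal{P})$ is then bounded above by the Kolmogorov--Sinai entropy of the shift on $(\orb_\infty(F), \tilde\mu)$ with respect to the pulled-back partition, and applying the standard Goodwyn argument on this full shift space produces a separated family of infinite orbits whose $N$-truncations are exactly the $N$-orbits in $X$ required to realize the lower bound.
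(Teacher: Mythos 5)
The paper does not actually prove this statement—it is imported verbatim as the main theorem of \cite{VivSir22}—so there is no in-paper argument to compare yours against; I can only judge the outline on its own terms, and it has one genuine gap, sitting exactly where you yourself locate ``the main obstacle.'' Your proposed resolution—lift $\mu$ to a shift-invariant measure $\tilde\mu$ on your orbit space $\orb_\infty(F)\subset X^{\mathbb N}$ and bound $h_\mu(F,\mathcal P)$ by the Kolmogorov--Sinai entropy of the shift $\sigma$ with respect to the pulled-back partition—does not remove that obstacle. Write $c_n(x)=\min\{j:F^n(x)\cap P_j\neq\emptyset\}$, so that the atom of $x$ in $\mathcal P'_N$ is determined by the string $(c_0(x),\dots,c_{N-1}(x))$. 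Pulling $\mathcal P'_N$ back to $\orb_\infty(F)$ via the zeroth coordinate, the resulting partition is \emph{not} refined by $\bigvee_{n=0}^{N-1}\pi_n^{-1}\mathcal P$ (where $\pi_n$ is the $n$-th coordinate projection on the orbit space): each $c_n(x)$ is a coordinatewise minimum over \emph{all} orbits issuing from $x$, whereas $\bigvee_n\pi_n^{-1}\mathcal P$ records only the itinerary of the single orbit $\omega$, and two orbits with identical itineraries can start at points with different $(c_n)$-codes. So the asserted inequality $h_\mu(F,\mathcal P)\le h_{\tilde\mu}(\sigma,\pi_0^{-1}\mathcal P)$ does not follow from monotonicity of entropy under refinement, and you offer no other argument for it; it is not even clear to me that it is true for every (or some) invariant lift $\tilde\mu$. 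Without it, neither the orbit selection ($x_n\in C_{j_n}$ along a single orbit) nor the final count of Kelly--Tennant separated orbits is justified, so the argument is incomplete at its central step.

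Two secondary points. First, the existence of the lift $\tilde\mu$ is plausible: the invariance condition $\mu(F^{-1}(A))\ge\mu(A)$ is precisely the Strassen marginal condition for a self-coupling of $\mu$ supported on the graph $\Gamma=\{(x,y):y\in F(x)\}$, from which a stationary Markov kernel and a shift-invariant lift follow. But Strassen's theorem requires $\Gamma$ to be closed (or at least suitably regular), and the stated hypothesis that $F(C)$ is closed for closed $C$ does not imply closedness of $\Gamma$; this needs to be assumed or argued (it does hold for the holomorphic correspondences to which the paper applies the theorem). Second, the bookkeeping $s_{N,\epsilon_0}(F)\ge\exp\bigl(H_\mu(\mathcal P'_N)-O(\eta N)\bigr)$ needs the usual Misiurewicz conditional-entropy estimate to discard low-measure atoms and to pass from the cells $P_j$ to the cores $C_j$; that part is standard, but it is downstream of the unproved comparison above.
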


Observe that holomorphic correspondences on compact Riemann Surfaces always satisfy the conditions in the above theorem.

Let $(S,\mathcal{A},\mu)$ be a measure space with $S\subset X$. Put $\hat{\mathcal{A}}=\lbrace A\subset X|A\cap S\in\mathcal{A}\rbrace$ and define $\hat{\mu}$ by
$$\hat{\mu}(A)=\mu(A\cap S).$$
Then $(X,\hat{\mathcal{A}},\hat{\mu})$ is a measure space as well. Furthermore, $\supp(\hat{\mu})=\supp(\mu)$. With this notation, we have the following lemma.

\begin{lemma}\label{restriction}
Let $F$ be a multivalued map on a compact metric space $X$. Suppose there exists a set $S\subset X$ satisfying $F^{-1}(S)= S$, for which the two-sided restriction $F|:S\to S$ given by $F|(z)=F(z)\cap S$ for $z\in S$ is a measurable function. Let $\hat{\mu}$ as above. Then, for all $\mu\in\mathcal{M}_1(F|)$, we have that
$$h_{\mu}(F|)\leq h_{\hat{\mu}}(F).$$
\end{lemma}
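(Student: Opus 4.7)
The plan is to take an arbitrary finite ordered partition $\mathcal{P}=\{P_1,\dots,P_k\}$ of $S$, extend it to the ordered partition $\mathcal{Q}=\{P_1,\dots,P_k,X\smallsetminus S\}$ of $X$ with $X\smallsetminus S$ placed last, and show that $h_{\hat\mu}(F,\mathcal{Q})=h_\mu(F|,\mathcal{P})$. Since $\hat\mu(X\smallsetminus S)=\mu(\emptyset)=0$ and $\hat\mu(P_j)=\mu(P_j)$, the convention $0\log 0=0$ gives $H_{\hat\mu}(\mathcal{Q})=H_\mu(\mathcal{P})$ immediately; the real work is to show this identity survives the Vivaldi--Siri refinements $\widetilde{\mathcal{Q}}_n$ and $\widetilde{\mathcal{P}}_n$, after which dividing by $N$ and taking $\limsup$ then $\sup_{\mathcal{P}}$ will finish the proof.

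The central observation, which I would establish first, is that $F^{-1}(S)=S$ forces $F(X\smallsetminus S)\subset X\smallsetminus S$: if $z\notin S$ then $F(z)\cap S=\emptyset$. From this, induction on $n$ gives the key identification
\[F^{-n}(A)=F|^{-n}(A)\qquad\text{for every } A\subset S,\]
since any $F$-orbit landing in $A\subset S$ at time $n$ must stay inside $S$ throughout (one step from $S$ into $X\smallsetminus S$ could never return). As a byproduct, the same forward-invariance of $X\smallsetminus S$ shows that $\hat\mu$ is $F$-invariant: $\hat\mu(F^{-1}(A))=\mu(F^{-1}(A)\cap S)\geq\mu(F|^{-1}(A\cap S))\geq\mu(A\cap S)=\hat\mu(A)$, using $\mu\in\mathcal{M}_1(F|)$, so $h_{\hat\mu}(F)$ is in fact defined.

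With this identification in hand, $Q_{j,n}=F^{-n}(P_j)\setminus\bigcup_{\ell<j}F^{-n}(P_\ell)$ equals $\widetilde{P}_{j,n}$ for every $j\leq k$, and because $\{\widetilde{P}_{j,n}\}_{j=1}^{k}$ already partitions $S$, the last block is forced to be $Q_{k+1,n}=X\smallsetminus S$. Refining over $0\leq n\leq N-1$, every cell of $\mathcal{Q}'_N$ is either a cell of $\mathcal{P}'_N$ sitting inside $S$, or else contained in $X\smallsetminus S$ (where $\hat\mu$ vanishes). The $-p\log p$ terms coming from the latter cells drop out, so $H_{\hat\mu}(\mathcal{Q}'_N)=H_\mu(\mathcal{P}'_N)$, giving $h_{\mu}(F|,\mathcal{P})=h_{\hat\mu}(F,\mathcal{Q})\leq h_{\hat\mu}(F)$, and taking the supremum over $\mathcal{P}$ finishes the argument.

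The only delicate step I expect is the identification $F^{-n}(A)=F|^{-n}(A)$ for $A\subset S$, which requires tracing through a possibly multivalued orbit and applying $F(X\smallsetminus S)\subset X\smallsetminus S$ at every intermediate step; if one is not careful, one might miss intermediate excursions outside $S$. Everything afterwards is bookkeeping on ordered partitions plus the vanishing of $\hat\mu$ outside $S$.
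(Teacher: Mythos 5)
Your proposal is correct and follows essentially the same route as the paper: extend a partition $\mathcal{P}$ of $S$ by the single extra block $X\smallsetminus S$, show the Vivaldi--Siri refinements of the extended partition are the refinements of $\mathcal{P}$ together with an $\hat\mu$-null block, and conclude $h_\mu(F|,\mathcal{P})=h_{\hat\mu}(F,\mathcal{Q})\leq h_{\hat\mu}(F)$ before taking the supremum. If anything, your explicit identification $F^{-n}(A)=F|^{-n}(A)$ for $A\subset S$ (via forward invariance of $X\smallsetminus S$) and your invariance argument for $\hat\mu$, which uses only the inclusion $F|^{-1}(A\cap S)\subset F^{-1}(A)\cap S$ rather than an equality of preimages, are slightly more careful than the paper's own bookkeeping.
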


\begin{proof}
Observe that $\hat{\mu}$ is invariant for $F$ in the sense of multivalued maps, since
$$\hat{\mu}(F^{-1}(A))=\mu(F^{-1}(A)\cap S)=\mu(F^{-1}(A)\cap F^{-1}(S))=\mu(F^{-1}(A\cap S))\geq\mu(A\cap S)=\hat{\mu}(A).$$
Let $\mathcal{P}=\lbrace P_1,P_2,\cdots,P_k\rbrace$ be a finite ordered measurable partition of $S$ with respect to the measure $\mu$, and write $\mathcal{P}^\dagger=\mathcal{P}\cup\lbrace X\setminus S\rbrace$. Then $\mathcal{P}^\dagger$ is a finite ordered measurable partition of $(X,\hat{\mu})$. Moreover,
\begin{eqnarray*}
\bigcup\limits_{j=1}^k F^{-1}(P_j)&=&\lbrace z|F(z)\cap P_j\neq\emptyset,\mbox{ for some }j\rbrace\\
&=&\left\lbrace z\Big|F(z)\cap\left(\bigcup\limits_{j=1}^k P_j\right)\neq\emptyset\right\rbrace\\
&=&F^{-1}\left(\bigcup\limits_{j=1}^k P_j\right)\\
&=&F^{-1}(S)\\
&=&S.
\end{eqnarray*}

Therefore,
\begin{eqnarray*}
\widetilde{(\mathcal{P}^\dagger)}_1&=&\left\lbrace F^{-1}(P_1),F^{-1}(P_2)\setminus F^{-1}(P_1),\cdots, F^{-1}(P_k)\setminus\bigcup\limits_{j=1}^{k-1} F^{-1}(P_j), F^{-1}(X\setminus S)\setminus S\right\rbrace\\
&=&\left\lbrace F^{-1}(P_1),F^{-1}(P_2)\setminus F^{-1}(P_1),\cdots,F^{-1}(P_k)\setminus\bigcup\limits_{j=1}^{k-1} F^{-1}(P_j),X\setminus S\right\rbrace\\
&=&(\widetilde{\mathcal{P}_1})^\dagger.
\end{eqnarray*}

Recursively, $\widetilde{(\mathcal{P}^\dagger)}_n=(\widetilde{\mathcal{P}}_n)^\dagger$, for all $n\geq 0$. In addition, we also have that
$$\mathcal{P}^\dagger\vee\mathcal{Q}^\dagger=\left(\mathcal{P}\vee\mathcal{Q}\right)\cup\lbrace X\setminus S\rbrace=(\mathcal{P}\vee\mathcal{Q})^\dagger,$$
for all $n,m\geq 0$. Thus, since $\supp(\hat{\mu})\subset S$,
\begin{eqnarray*}
H_{\hat{\mu}}\left(F,\bigvee\limits_{n=0}^{N-1}(\widetilde{\mathcal{P}}_n)^\dagger\right)&=&H_{\hat{\mu}}\left(F,\left(\bigvee\limits_{n=0}^{N-1}\widetilde{\mathcal{P}}_n\right)\cup\lbrace X\setminus S\rbrace\right)\\
&=&H_{\mu}\left(F|,\bigvee\limits_{n=0}^{N-1}\widetilde{\mathcal{P}}_n\right).
\end{eqnarray*}

Therefore,
$$h_{\hat{\mu}}\left(F,\mathcal{P}^\dagger\right)=\limsup\limits_{N\to\infty}\frac{1}{N}H_{\hat{\mu}}\left(F,\bigvee\limits_{n=0}^{N-1}\widetilde{(\mathcal{P}^\dagger)}_n\right)=\limsup\limits_{N\to\infty}\frac{1}{N}H_{\mu}\left(F|,\bigvee\limits_{n=0}^{N-1}\widetilde{\mathcal{P}}_n\right)=h_{\mu}\left(F|,\mathcal{P}\right).$$

From the construction of the partition $\mathcal{P}^\dagger$ from the partition $\mathcal{P}$, after taking supremum over all partitions $\mathcal{P}$ of $S$ we get that
$$h_{\mu}(F|)\leq h_{\hat{\mu}}(F).$$
\end{proof}

Putting together results from \cite{CarMetMor15} and \cite{VivSir22}, we have the following proposition.

\begin{proposition}\label{entropy0} 
Let $F$ be a multivalued map on a compact metric space $(X,d)$ satisfying:
\begin{itemize}
\item[(i)] For all $\varepsilon >0$, there exists $\delta >0$ so that for $x,y\in X$ with $d(x.y)<\delta$, there exist $(x_i)_{i=0}^{\infty}, (y_i)_{i=0}^{\infty}$ such that $x_0=x$, $y_0=y$, $x_{i+1}\in F(x_i)$, $y_{i+1}\in F(y_i)$ and $d(x_i,y_i)<\varepsilon$, for all $i\geq 0$, and
\item[(ii)] For all $\varepsilon>0$, there exists $\delta>0$ so that for all $x\in X$ and for all $n\in\mathbb{N}$, there exists $x_i\in F^i(x)$, $0\leq i\leq n-1$ such that $x_0=x$ and $B_n(x,\delta)\subseteq\bigcap_{i=0}^{n-1} F^{-i}(B(x_i,\epsilon))$.
\end{itemize}
Then, $$\sup\limits_{\mu\in\mathcal{M}_1(F)}h_{\mu}(F)=0.$$
\end{proposition}

\begin{lemma}\label{radius}
Let $R$ be a rational map. For all $\delta>0$ small enough, there exists $r_R(\delta)>0$ such that $\lim\limits_{\delta\to 0}r_R(\delta)=0$ and $\operatorname{diam}(U)\leq r_R(\delta)$, for each component $U$ of $R^{-1}(B(x,\delta))$, for all $x\in\widehat{\mathbb{C}}$.
\end{lemma}

\begin{proof}
Put $p\in\widehat{\mathbb{C}}$. By the Local Normal Form Theorem, there exists $(U(p),\Phi_{1,p},\Phi_{2,p},m(p))$ be so that 
\begin{itemize}
\item $U(p)$ is a topological disk containing $p$,
\item $\Phi_{1,p}(U(p))=\mathbb{D}$,
\item $\Phi_{2,p}(R(U(p))=\mathbb{D}$,
\item $\Phi_{2,p}\circ R\circ \Phi_{1,p}^{-1}(z)=z^{m(p)}$, for some $m(p)\in\mathbb{Z}$.
\end{itemize}
Since $\bigcup_{p\in\widehat{\mathbb{C}}}R(U(p))=\widehat{\mathbb{C}}$ and $\widehat{\mathbb{C}}$ is compact, there exist $p_1,\cdot,p_N\in\widehat{\mathbb{C}}$ satisfying that $\bigcup_{j=1}^N R(U(p_j))=\widehat{\mathbb{C}}$.
\\

Let $f(z)=z^m$. We know that for every $A\subset\mathbb{C}$, $f^{-1}(A)=\bigcup_{k=1}^{m}g_k(A)$, where $g_k(re^{i\theta})=\zeta^kr^{\frac{1}{m}}e^{i\frac{\theta}{m}}$ for $1\leq k\leq m$. 
Let $w\in\mathbb{D}$ and let $\epsilon>0$ be so that $B(z,2\epsilon)\subset\mathbb{D}$. 
\\

If $w=0$, then $g_k(B(w,\epsilon))=B(0,\delta^{\frac{1}{m}})$. Therefore, each component of $f^{-1}(B(w,\delta))$ has diameter at most $w\delta^{\frac{1}{m}}$.
\\

If $0\in B(w,\epsilon)$, then $B(w,\epsilon)\subset B(w,2\epsilon)\subset\mathbb{D}$. By above, each component of $f^{-1}(B(w,\epsilon))$ has diameter at most $2(w\epsilon)^{\frac{1}{m}}$.
\\

If $0\notin B(w,\epsilon)$ and $w'\in B(w,\epsilon)$, write $w'=|w'|e^{i\theta}$ with $\theta\in (\Arg(w)-\pi,\Arg(w)+\pi)$, where $\Arg(w)$ is the principal branch of logarithm. Then $|w'|\in(|w|-\epsilon,|w|+\epsilon)$ and $\theta\in(\Arg(w)-\epsilon,\Arg(w)+\epsilon)$. Thus,
\begin{eqnarray*}
|g_k(w')-g_k(w)|&=&\big| |w'|^{\frac{1}{m}}e^{i\frac{\theta}{m}}-|w|^{\frac{1}{m}}e^{i\frac{\theta}{m}}+|w|^{\frac{1}{m}}e^{i\frac{\theta}{m}}-|w|^{\frac{1}{m}}e^{i\frac{\Arg(z)}{m}} \big|\\
&=&\big| (|w'|^{\frac{1}{m}}-|w|^{\frac{1}{m}})e^{i\frac{\theta}{m}}+|w|^{\frac{1}{m}}(e^{i\frac{\theta}{m}}-e^{i\frac{\Arg(w)}{m}}) \big|\\
&<&\epsilon^{\frac{1}{m}}+\frac{\pi}{m}\epsilon.
\end{eqnarray*}
Therefore, each component of $f^{-1}(B(w,\epsilon))$ has diameter at most $\epsilon^{\frac{1}{m}}+\frac{\pi}{m}\epsilon$.

Observe that for each $1\leq j\leq N$, both $\Phi_{2,p_j}$ and $\Phi_{1,p_j}^{-1}$ are Lipschitz. Let $L$ be the maximum Lipschitz constant among all $\Phi_{2,p_j}$, $1\leq j\leq N$, and $K$ be the maximum Lipschitz constant among all $\Phi_{1,p_j}^{-1}$, $1\leq j\leq N$. Denote by $\delta_0$ the Lebesgue number of the covering $\lbrace R(U(p_j))\rbrace_{j=1}^N$ and let $\delta<\frac{\delta_0}{2}$. Then there exists $1\leq j\leq N$ so that $B(z,\delta)\subset R(U(p_j))$. Thus,
$$\Phi_{2,p_j}(B(z,\delta))\subset B(\Phi_{2,p}(z),L\delta)\subset\mathbb{D}.$$
Then, each component of $f^{-1}(\Phi_{2,p_j}(B(z,\delta)))$ has diameter at most $\max\lbrace 2(2L\delta)^{\frac{1}{m}},(L\delta)^{\frac{1}{m}}+\frac{\pi}{m}(L\delta)\rbrace$. Finally, since $R^{-1}(B(z,\delta))=\Phi_{1,p_j}^{-1}f^{-1}\Phi_{2,p_j}(B(z,\delta))$, then each component of $R^{-1}(B(z,\delta))$ has diameter at most
$$r_R(\delta)\coloneqq\max\left\lbrace 2K(2L\delta)^{\frac{1}{m}},K((L\delta)^{\frac{1}{m}}+\dfrac{\pi}{m}(L\delta))\right\rbrace.$$
From the definition of $r_R(\delta)$, it is clear that $\lim\limits_{\delta\to 0}r_R(\delta)=0$. 
\end{proof}

For the purpose of the proof of the following lemma, we use the notation $r_R^0(\delta)=\delta$, $r_R^n(\delta)=r_R(r_R^{n-1}(\delta))$, for $n\geq 1$.

\begin{lemma}\label{rational}
Let $R$ be a rational map on $\widehat{\mathbb{C}}$ of degree $\operatorname{deg}(R)\geq 2$. Then 
$$\sup\limits_{\mu}h_\mu(\restr{R^{-1}}{\mathcal{J}(R)})=0,$$
where the supremum runs over all $\mu\in\mathcal{M}_1\left(\restr{R^{-1}}{\mathcal{J}(R))}\right)$. Furthermore, if $R$ has empty exceptional set, then
$$\sup\limits_{\mu\in\mathcal{M}_1(R^{-1})}h_{\mu}(R^{-1})=0.$$
\end{lemma}

\begin{proof}
We prove the case where $R$ has empty exceptional set, as the proof works as well for the restriction to the Julia set. Let $p$ be a repelling periodic point of minimal period $n_0$ and multiplier $\lambda$, and put $\tilde{R}=R^{n_0}$, then $R^{n_0}$ also has empty exceptional set. We will show that $\tilde{R}$ satisfies properties (i) and (ii) of Proposition \ref{entropy0}.
\\

Let $\rho>0$ be so that $\restr{\tilde{R}}{B(p,\rho)}$ is conjugate to $z\mapsto \lambda z$ around the origin, where $|\lambda |>1$. Fix $0<\epsilon<\rho$. There exists $N\in\mathbb{N}$ such that $\tilde{R}^N(B(p,\frac{\epsilon}{2}))=\widehat{\mathbb{C}}$. Thus, for every $x\in\widehat{\mathbb{C}}$, there exists $x_N\in \tilde{R}^{-N}(x)\cap B(p,\frac{\epsilon}{2})$. Put $x_i=\tilde{R}^{N-i}(x_N)$ for $1\leq i\leq N$.
\\

Let $\delta>0$ be small so that $\max\lbrace r_{\tilde{R}}^i(\delta):0\leq i\leq N\rbrace<\frac{\epsilon}{2}$. Let $y\in B(x,\delta)$ and $U$ be the connected component of $\tilde{R}^{-1}(B(x,\delta))$ containing $x_1$. Since $\tilde{R}$ maps $U$ onto $B(x,\delta)$, there exists  $y_1\in \tilde{R}^{-1}(y)\cap U$, and we have that $d(x_1,y_1)< r_{\tilde{R}}(\delta)$. Recursively, we find $(y_i)_{i=1}^N$ so that $y_{i+1}\in \tilde{R}^{-1}(y_i)$ and $d(x_i,y_i)<r_{\tilde{R}}^i(\delta)<\frac{\epsilon}{2}$, for $0\leq i\leq N$.\\

Since $y_N\in B(x_N,\frac{\epsilon}{2})$, then $x_N,y_N\in B(p,\rho)$. Since on $B(p,\rho)$, $\tilde{R}$ is conjugate to $z\to\lambda z$ with $|\lambda|>1$, then for any $(x_i)_{i=N+1}^\infty, (y_i)_{i=N+1}^\infty$ so that $x_i\in \tilde{R}^{-1}(x_{i-1})$ and $y_i\in \tilde{R}^{-1}(y_{i-1})$ for all $i\geq N+1$, we have that $d(x_i,y_i)<\epsilon$ as well.
\\

Thus, making $x_0=x$ and $y_0=y$, we have the existence of sequences $(x_i)_{i=0}^\infty$, $(y_i)_{i=0}^\infty$ with $x_0=x$, $y_0=y$, $x_{i+1}\in\tilde{R}^{-1}(x_i)$, $y_{i+1}\in \tilde{R}^{-1}(y_i)$ and $d(x_i,y_i)<\epsilon$, for all $i\geq 0$. That is, $\tilde{R}$ satisfies property (i) of Proposition \ref{entropy0}.
\\

Now fix $\epsilon>0$, and note that $\lbrace B(\zeta,\frac{\epsilon}{2})\rbrace_{\zeta\in\mathcal{J}(\tilde{R})}$ is an open cover of $\mathcal{J}(\tilde{R})$. Since $\mathcal{J}(\tilde{R})$ is compact, there is a finite subcover $\lbrace B(\zeta_j,\frac{\epsilon}{2})\rbrace_{j=1}^n$. For each $1\leq j\leq n$, there exists $n_j\in\mathbb{N}$ such that $\tilde{R}^{n_j}(B(\zeta_j,\frac{\epsilon}{2}))=\widehat{\mathbb{C}}$. Put $K=\max\lbrace n_j:1\leq j\leq n\rbrace$. We know $R^k(B(\zeta_j,\frac{\epsilon}{2}))=\widehat{\mathbb{C}}$, for all $k\geq K$ and $1\leq j\leq n$. In particular, if $U=\bigcup_{j=1}^n B(\zeta_j,\frac{\epsilon}{2})$, then $R^k(U)=\widehat{\mathbb{C}}$.

For all $x\in\widehat{\mathbb{C}}$ and $i\geq K$, there exists $x_i\in \tilde{R}^{-i}(z)\cap U$. Thus, there exists $1\leq j\leq n$ such that $x_i\in B(\zeta_j,\frac{\epsilon}{2})$. Since $B(\zeta_j,\frac{\epsilon}{2})\subset B(x_i,\epsilon)$, then $\tilde{R}^i(B(x_i,\epsilon))=\widehat{\mathbb{C}}$. Thus, for all $n\geq K+1$,
\begin{equation}\label{intersection1}
\bigcap\limits_{i=K}^{n-1} \tilde{R}^i(B(x_i,\epsilon))=\widehat{\mathbb{C}}.\end{equation}

Let $\delta>0$ be such that $\max\lbrace r_{\tilde{R}}^j(\delta):0\leq j\leq K\rbrace<\epsilon$. Let $(x_i)_{i=0}^K$ be any sequence satisfying $x_0=x$ and $x_i\in \tilde{R}^{-i}(x)$, $1\leq i\leq K$. If $y\in B(x,\delta)$ and $U$ is the connected component of $\tilde{R}^{-1}(B(x,\delta))$ containing $x_1$, then there exists a unique $y_1\in U$ with $\tilde{R}(y_1)=y$. By Lemma \ref{radius}, $d(x_1,y_1)<r(\delta)<\epsilon$. Now let $V$ be the connected component of $\tilde{R}^{-1}(U)$ containing $x_2$. There exists $y_2\in V$ so that $\tilde{R}(y_2)=y_1$ and $d(x_2,y_2)<\epsilon$. Recursively, we find $(y_i)_{i=0}^K$ s.t. $y_0=y$, $y_i\in \tilde{R}^{-i}(y)$ and $d(x_i,y_i)<\epsilon$. Therefore, $y_i\in B(x_i,\epsilon)$, and $y=R^i(y_i)\in\tilde{R}^{-1}(B(x_i,\epsilon))$, for $0\leq i\leq K$. This proves that
$$B(x,\delta)\subset\bigcap\limits_{i=0}^{n-1}\tilde{R}^i(B(x_i,\epsilon)),$$
for all $n\leq K$. Furthermore, using equation (\ref{intersection1}) we get that
$$B(x,\delta)\subset\bigcap\limits_{i=0}^{K-1}\tilde{R}^i(B(x_i,\epsilon))=\bigcap\limits_{i=0}^{n-1}\tilde{R}^i(B(x_i,\epsilon)),$$
for all $n\geq K+1$. This proves that $\tilde{R}$ satisfies property (ii) of Proposition \ref{entropy0}. 

If $\mu\in\mathcal{M}_1(R^{-1})$, then $\mu\in\mathcal{M}_1(\tilde{R}^{-1})$ and $h_{\mu}(\tilde{R}^{-1})=0$. Therefore,
$$h_{\mu}(R^{-1})\leq\frac{1}{n_0}h_{\mu}(\tilde{R}^{-1})=0.$$
Since this is true for arbitrary $\mu\in\mathcal{M}_1(R^{-1})$, this proves the desired result.
\end{proof}

\section{Composition of Covering Correspondences}\label{com}
In this section we will define the families $\lbrace\F\rbrace_{a\in\mathcal{K}}$ and $\lbrace\mathcal{F}_{R,S}\rbrace_{(R,S)\in\mathcal{K}'}$ whose topological and metric entropy we compute in this work. In addition, we prove Theorem \ref{B}. In order to do so, we start by defining the composition of holomorphic correspondences.

For $a\neq 1$, we take the involution
$$\J(z)=\frac{(a+1)z-2a}{2z-(a+1)}$$
and define $\F$ to be the correspondence given by $\F\coloneqq\J\circ\covq$, where $Q(z)=z^3-3z$. Observe that $d_1(\F)=d_2(\F)=2$ and that $\F(1)=\lbrace 1\rbrace$ with multiplicity $2$. In the next section we will talk about the behavior around this fixed point, which is work by Bullett-Lomonaco \cite{BulLom20I}. We note that, from the previous section, we can write $\J(z)=\covr$, where $R(z)=\frac{z^2-a}{(z-1)^2}.$

We define
$$\mathcal{F}_{R,S}\coloneqq\covr\circ\covs$$
for $R,S$ rational maps of degree greater than or equal to $2$. It is immediate from Proposition \ref{inv} that $\lbrace\F\rbrace_{a\in\mathbb{C}\setminus\lbrace 1\rbrace}\subset\lbrace\mathcal{F}_{R,S}\rbrace_{R,S}$, where the latter runs over all rational maps $R$ and $S$.

\begin{definition}

A \emph{Klein combination pair} $(\Delta_{\J},\Delta_{\covq})$ for $\F$ is a pair of open subsets of $\widehat{\mathbb{C}}$ so that $\J(\Delta_{\J})\cap\Delta_{\J}=\emptyset$, $\covq(\Delta_{\covq})\cap\Delta_{\covq}=\emptyset,$ and 
$$\Delta_{\J}\cup\Delta_{\covq}=\widehat{\mathbb{C}}\setminus\lbrace 1\rbrace.$$
We denote by $\mathcal{K}$ the collection of $a\in\mathbb{C}\setminus\lbrace 1\rbrace$ for which there exists a Klein combination pair $(\Delta_{\J},\Delta_{\covq})$ for $\F$. 
\end{definition}

The restricted family $\lbrace \F\rbrace_{a\in\mathcal{K}}$ is described as a mating of parabolic quadratic rational maps and $\PSL2(\mathbb{Z})$ in  \cite{BulLom20I}.
\begin{definition}
A \emph{transversal} $\Delta_R\subset\widehat{\mathbb{C}}$ for $\covr$ is a maximal set where the rational map $R$ is injective.
\\
A \emph{Klein combination pair} $(\Delta_R,\Delta_S)$ of $\mathcal{F}_{R,S}$ is a pair of subsets of $\widehat{\mathbb{C}}$ so that:
\begin{itemize}
\item $\Delta_R$ and $\Delta_S$ are transversals for $R$ and $S$, respectively,
\item $\overline{\interior(\Delta_R)}=\overline{\Delta_R}$ and $\overline{\interior(\Delta_S)}=\overline{\Delta_S}$,
\item $\interior(\Delta_R)\cup\interior(\Delta_S)=\widehat{\mathbb{C}}$,
\item $\mathcal{F}_{R,S}(\Delta_S)$ and $\mathcal{F}_{R,S}^{-1}(\Delta_R)$ are topological disks.
\end{itemize}
We denote by $\mathcal{K}'$ the collection of $(R,S)$ for which there is a Klein combination pair for $\mathcal{F}_{R,S}$.
\end{definition}

In \cite{Bul00}, the author showed that under these conditions,
$$\covr(\interior(\Delta_R))\cap\overline{\Delta_S}=\emptyset.$$
Furthermore, he proved a Klein Combination theorem for the family $\lbrace\mathcal{F}_{R,S}\rbrace_{(R,S)\in\mathcal{K}'}$.

\begin{definition}
Let $G$ be a connected Lie group, $\Lambda$ be a torsion free lattice and $K$ a compact Lie subgroup of $G$. A modular correspondence $F$ on $X=\Lambda\setminus G/K$ is that whose graph has the form $\sum_i n_i\Gamma_{g_i}$, where each $\Gamma_{g_i}$ is the projection to $X\times X$ of the graph in $G\times G$ of left multiplication by $g_i\in G$ satisfying $[\Lambda:g_i\Lambda g_i^{-1}\cap\Lambda]<\infty$. 
\end{definition}

\begin{definition}
Let $F$ be a holomorphic correspondence on a compact Riemann surface, satisfying $d_1(F)=d_2(F)=d$. We say $F$ is \emph{weakly-modular} if there exists Borel probability measures $\mu_1$ and $\mu_2$ on $X$, with the property that $(\restr{\pi_1}{\Gamma})^*\mu_1=(\restr{\pi_2}{\Gamma})^*\mu_2$.
\end{definition}

Modular correspondences which are holomorphic on a compact Riemann surface, are always weakly-modular. Furthermore, every modular correspondence $F$ has a Borel probability measure $\lambda$ that assigns positive measure to open sets, and so that $F^*\lambda=d\lambda$.


Let $X$ be a compact Riemann surface and let $F$ be a holomorphic correspondence on $X$, whose graph is $\Gamma$. If $\alpha$ is a smooth form on $X$, we define
$$F^*(\alpha)\coloneqq{\pi_2}_*(\Gamma\wedge\pi_1^*(\alpha))\mbox{ and }F_*(\alpha)\coloneqq{\pi_1}_*(\Gamma\wedge\pi_2^*(\alpha))$$
to be the \emph{pull-back} and \emph{push-forward}, respectively, of $\alpha$ under $F$. We have that $F^*(\alpha)$ is a form that is smooth on $X\setminus B_1(\Gamma)$ and $F_*(\alpha)$ is smooth on $X\setminus B_2(\Gamma)$. 

We define
$$L^2_{(1,0)}\coloneqq\lbrace\phi: \phi\mbox{ is a }(1,0)-\mbox{form on }X\mbox{ with }L^2\mbox{ coefficients}\rbrace.$$
For $\phi\in L^2_{(1,0)}$, we define
$$\|\phi\|_{L^2}\coloneqq\left(\int_X i\phi\wedge\overline{\phi}\right)^{\frac{1}{2}}.$$
This is well defined since $i\phi\wedge\overline{\phi}$ is a $(1,1)$-form. 
We define as well $\|\cdot\|$ to be the operator norm.

In \cite{DinKauWu20}, Dinh-Kauffman-Wu proved the following proposition, which was key for their equidistribution result.

\begin{proposition}\label{norm}
Let $F$ be a holomorphic correspondence on a compact Riemann surface $X$ with $d(F)=d(F^{-1})=d$. Let $\Gamma$ be the graph of $F$. Then the pull-back action of $F$ on smooth $(1,0)$-form extends to an  operator on $L^2_{(1,0)}$, and
\begin{itemize}
\item[(a)] The operator $\frac{1}{d}F^*:L^2_{(1,0)}\rightarrow L^2_{(1,0)}$ satisfies $\Vert\frac{1}{d}F^*\Vert\leq 1$.
\item[(b)] $\|\frac{1}{d}F^*\phi\|_{L^2}=\|\phi\|_{L^2}$ for $\phi\in L^2_{(1,0)}$ if and only if for every $U\subset X\setminus B_1(\Gamma)$ and for every pair of local branches $f_1$ and $f_2$ of $F$ on $U$, the equality $f_1^*\phi=f_2^*\phi$ holds on $U$.
\item[(c)] If $\Vert\frac{1}{d}F^*\Vert= 1$, then $F$ is weakly-modular.
\end{itemize}
\end{proposition}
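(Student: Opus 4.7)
The plan is to exploit the local product structure of $\Gamma$ away from its critical sets. On $X\setminus B_1(\Gamma)$ the projection $\pi_1|_\Gamma$ is a local biholomorphism with $d$ sheets, so over each small open $U$ we may pick holomorphic maps $g_1,\dots,g_d:U\to X$ with $(g_i(w),w)\in\Gamma$, in terms of which the push-forward in the definition of $F^*$ collapses to
$$F^*\phi=\sum_{i=1}^d g_i^*\phi\qquad\text{on }U.$$
These $g_i$ are exactly the local branches of $F$ appearing in the statement.

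For (a), I would expand
$$\|F^*\phi\|_{L^2}^2=\int_X i\,F^*\phi\wedge\overline{F^*\phi}=\sum_{i,j}\int_X i\,g_i^*\phi\wedge\overline{g_j^*\phi},$$
apply the Cauchy-Schwarz bound $\|v_1+\cdots+v_d\|^2\le d(\|v_1\|^2+\cdots+\|v_d\|^2)$ to the vectors $g_i^*\phi\in L^2_{(1,0)}$, and recognise the resulting diagonal sum as a pullback from $\Gamma$:
$$\sum_{i=1}^d \int_X i\,g_i^*\phi\wedge\overline{g_i^*\phi}=\int_\Gamma \pi_1^*(i\phi\wedge\overline\phi)=d\,\|\phi\|_{L^2}^2,$$
using that $\pi_1|_\Gamma$ has topological degree $d$. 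Combining gives $\|F^*\phi\|_{L^2}\le d\|\phi\|_{L^2}$, which is (a); the analytic locus $B_1(\Gamma)$ has measure zero, so the local computation is sufficient.

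Part (b) is the equality case of that same Cauchy-Schwarz step: $\|\sum_i g_i^*\phi\|^2=d\sum_i\|g_i^*\phi\|^2$ holds precisely when $(g_1^*\phi,\dots,g_d^*\phi)$ is proportional to $(1,\dots,1)$, i.e.\ when all the $g_i^*\phi$ coincide in $L^2(U)$. Since the diagonal sum is rigidly equal to $d\|\phi\|_{L^2}^2$, saturation of the inequality $\|\tfrac{1}{d}F^*\phi\|_{L^2}\le\|\phi\|_{L^2}$ is equivalent to $g_i^*\phi=g_j^*\phi$ on every connected $U\subset X\setminus B_1(\Gamma)$ and every pair of branches; the converse is the same calculation with the sum collapsed onto a single copy.

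For (c), I would choose a normalised maximising sequence $\phi_n$ with $\|\tfrac{1}{d}F^*\phi_n\|_{L^2}\to 1$ and form the probability measures
$$\mu_2^{(n)}:=i\phi_n\wedge\overline{\phi_n},\qquad \mu_1^{(n)}:=\frac{i\,\bigl(\tfrac{1}{d}F^*\phi_n\bigr)\wedge\overline{\tfrac{1}{d}F^*\phi_n}}{\|\tfrac{1}{d}F^*\phi_n\|_{L^2}^2},$$
and extract weak-$*$ limits $\mu_1,\mu_2$ on $X$. Asymptotic saturation of Cauchy-Schwarz in the bound from (a) should force the local branches $g_j^*\phi_n$ to become asymptotically equal in $L^2$ on each $U\subset X\setminus B_1(\Gamma)$, which translates into $\pi_2^*\mu_2^{(n)}-\pi_1^*\mu_1^{(n)}\to 0$ in the weak sense on $\Gamma$; passing to the limit then yields the weakly-modular identity $(\pi_1|_\Gamma)^*\mu_1=(\pi_2|_\Gamma)^*\mu_2$. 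The hard part will be this last passage to the limit: the critical loci $B_1(\Gamma)\cup B_2(\Gamma)$ are exactly where branches collide, the sequence $\mu_i^{(n)}$ could a priori concentrate there, and one has to upgrade ``small in $L^2(U)$'' to ``small when tested against continuous functions on $\Gamma$'' uniformly up to the critical set, so that the limiting equality survives rather than degenerating to a mere inequality.
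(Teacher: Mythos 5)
First, a point of reference: the paper does not prove this proposition at all --- it is imported verbatim from Dinh--Kauffman--Wu \cite{DinKauWu20} --- so there is no in-paper argument to compare yours against. On its own terms, your treatment of (a) and (b) is correct and is the standard one: away from the critical values $F^*\phi$ is the sum of the $d$ branch pullbacks, the pointwise Cauchy--Schwarz inequality $|v_1+\cdots+v_d|^2\le d\sum_i|v_i|^2$ gives the bound, the diagonal sum is the integral of $i\phi\wedge\overline{\phi}$ pulled back to $\Gamma$ under a degree-$d$ projection and hence equals $d\|\phi\|_{L^2}^2$, and the equality case forces all branch pullbacks to coincide a.e., which is (b). One small slip: your $g_i$, defined by $(g_i(w),w)\in\Gamma$, are local branches of $F^{-1}$ and live on $X\setminus B_2(\Gamma)$, whereas the statement of (b) concerns branches $f_i$ of $F$ (i.e.\ $(z,f_i(z))\in\Gamma$) on $X\setminus B_1(\Gamma)$. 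Since $d_1=d_2=d$ the norm computation is insensitive to this, but to recover (b) exactly as stated you should run the argument with the branches of $F$.

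The genuine gap is in (c): you correctly identify the difficult step (upgrading asymptotic $L^2$-equality of branches to a weak-$*$ identity of limit measures, uniformly up to the critical locus) but you do not carry it out, and without it the argument is only an outline. The step does close, by quantifying the Cauchy--Schwarz defect. Writing $v_i=f_i^*\phi_n$ for a normalized maximizing sequence, the pointwise identity
$$d\sum_i i\,v_i\wedge\overline{v_i}-i\Bigl(\sum_i v_i\Bigr)\wedge\overline{\Bigl(\sum_j v_j\Bigr)}=\sum_{i<j} i\,(v_i-v_j)\wedge\overline{(v_i-v_j)}$$
shows that $\varepsilon_n:=\sum_{i<j}\int_X|v_i-v_j|^2=d^2-\|F^*\phi_n\|_{L^2}^2\to0$. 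Now pair $\pi_1^*\mu_1^{(n)}-\pi_2^*\mu_2^{(n)}$ with a continuous $\chi$ on $\Gamma$: on the sheet of $f_i$ the relevant density is $\frac{1}{d^2}\bigl|\sum_j v_j\bigr|^2-|v_i|^2$, which factors as $\frac{1}{d^2}$ times the (real part of the) pairing of $\sum_j(v_j-v_i)$ against $\sum_j v_j+dv_i$; Cauchy--Schwarz in $L^2$ then bounds the whole pairing by $C_d\,\|\chi\|_\infty\,\varepsilon_n^{1/2}$, uniformly over $\chi$ with $\|\chi\|_\infty\le1$. This is precisely the upgrade from ``small in $L^2(U)$'' to ``small against continuous test functions'' that you flag as the hard part, and it costs nothing near $B_1(\Gamma)\cup B_2(\Gamma)$: the estimate is on densities of absolutely continuous measures, and the critical locus is a finite set, hence null for every $\mu_i^{(n)}$ even if the weak-$*$ limits end up charging it. Passing to subsequential weak-$*$ limits (which remain probability measures on the compact $X$) yields $(\restr{\pi_1}{\Gamma})^*\mu_1=(\restr{\pi_2}{\Gamma})^*\mu_2$, i.e.\ weak modularity. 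You should either supply this uniform estimate or cite \cite{DinKauWu20} for part (c), as the paper itself does.
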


\begin{proof}[Proof of Theorem \ref{B}]
We will use the above proposition to show that $\mathcal{F}_{R,S}$ is weakly-modular. Indeed, the $(1,0)$-form $\phi_R(z)=e^{-|R(z)|}dz$ satisfies
$$\Big|\int_{\widehat{\mathbb{C}}}e
^{-|R(z)|}\omega_{FS}\Big|\leq\int_{\widehat{\mathbb{C}}}\omega_{FS}=\pi.$$ Therefore $\phi_R\in L^2_{(1,0)}$. Let $U\subset \widehat{\mathbb{C}}\setminus A_2(\Gamma_R)$. Then the deleted covering correspondence $\covr$ sends $z$ to the values $w$ for which $\frac{R(z)-R(w)}{z-w}=0$. Thus, any two local branches $f_1$ and $f_2$ of $\covr$ satisfy $f_1^*\phi_R(z)=R(z)=f_2^*\phi(z)$. By Proposition \ref{norm} part (b), this implies that $\Vert\frac{1}{(\deg R-1)}\covr^*\phi_R\Vert_{L^2}=\Vert\phi_R\Vert_{L^2}$ and $\Vert\frac{1}{(\deg R-1)}\covr^*\Vert=1$. Similarly, $\Vert\frac{1}{(\deg S-1)}\covs^*\Vert=1$. Since $d(\mathcal{F}_{R,S})=(\deg R-1)(\deg S-1)$ and $\mathcal{F}_{R,S}=\covr\circ\covs$, we get that
$$\Big\Vert\frac{1}{d(\mathcal{F}_{R,S})}\mathcal{F}^*_{R,S}\Big\Vert=1.$$
By Proposition \ref{norm} part (c), $\mathcal{F}_{R,S}$ is weakly-modular.

Now observe that
$$\left(\covr\circ\covs\right)(\Delta_S)\subseteq\covr\left(\interior\left(\Delta_R\right)\right)\subsetneq\interior\left(\Delta_S\right).$$
Suppose that $\mathcal{F}_{R,S}=\covr\circ\covs$ was modular. Then there would be a Borel measure $\lambda$ that assigns positive measure to nonempty open sets, and so that $(\mathcal{F}_{R,S})^*\lambda=(\deg(R)-1)(\deg(S)-1)\lambda$. In particular, we would have that 
$$\lambda(\interior(\Delta_S)\smallsetminus\covr\circ\covs(\interior(\Delta_S)))=0.$$
However,
$$\covr\circ\covs(\overline{\Delta_S})=\pi_2((\overline{\Delta_S}\times\widehat{\mathbb{C}})\cap\Gamma)$$
is closed, and hence $\interior(\Delta_S)\smallsetminus\covr\circ\covs(\overline{\Delta_S})$ is a nonempty open set which is contained in $\interior(\Delta_S)\smallsetminus\covr\circ\covs(\interior(\Delta_S))$. This is a contradiction. Thus, $\covr\circ\covs$ is not modular.
\end{proof}

Obtaining the measures $\mu_1$ and $\mu_2$ explicitly for the weak-modularity of $\mathcal{F}_{R,S}$ is possible following the argument in \cite{Mat23} for weak-modularity of $\F$. 


\subsection{Limit sets and rational behavior}

As mentioned in Section \ref{com}, $z=1$ is a fixed point of $\F$. Furthermore, it was proven in \cite{Mat23} that the set of critical values of $\F^{-1}$ is $B_1(\Gamma_a)=\lbrace\infty,-2,2\rbrace$. Thus, since $1\notin B_1(\Gamma_a)$, then there exists a holomorphic function $g_a:\Omega\to\widehat{\mathbb{C}}$ on a neighborhood $\Omega$ of $z=1$ whose graph contains $(1,1)$ and is contained in $\Gamma_a$. The following is from \cite{BulLom20I}.

\begin{proposition}
\begin{itemize}
\item[1.] After the change of coordinates $z\mapsto z-1$, $g_a$ has Taylor series expansion
$$g_a(z)=\left\lbrace\begin{matrix} z+\frac{a-7}{3(a-1)}z^2+\cdots&a\neq 7\\
z+\frac{1}{27}z^4+\cdots&a=7.
\end{matrix}\right.$$
\item[2.] Let $a\in\mathcal{K}$ and $(\Delta_{\J},\Delta_{\covq})$ be a Klein combination pair for $\F$. After a small perturbation of $\partial\Delta_{\J}$ and $\partial\Delta_{\covq}$ in a neighborhood of $z=1$, we obtain a Klein combination pair for $\F$ whose boundaries are smooth at $z=1$, and transverse to the repelling direction at $z=1$ of $g_a$ for $a\neq 7$, and to the real axis if $a=7$.
\end{itemize}
\end{proposition}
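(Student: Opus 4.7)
My plan is to treat the two parts separately: part~1 is a Taylor-series computation of the germ $g_a$ at its fixed point, while part~2 is a geometric perturbation argument based on the parabolic normal form obtained in part~1.

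For part~1, I would first identify the branch of $\covq$ through $(1,1)$. Since $\frac{Q(z)-Q(w)}{z-w}=z^{2}+zw+w^{2}-3$, the two values of $\covq(z)$ are the roots of $w^{2}+zw+(z^{2}-3)=0$, and at $z=1$ these are $w=1$ and $w=-2$; the branch we want is $h(z)=\frac{-z+\sqrt{12-3z^{2}}}{2}$. Expanding $h$ at $z=1$ via the binomial series in $u=z-1$, and expanding $\J(1+v)=\frac{(1-a)+(a+1)v}{(1-a)+2v}$ via a geometric series in $\frac{2v}{1-a}$, the composition $g_a=\J\circ h$ reduces to a routine power-series calculation. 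The $u$-coefficient comes out as $1$ (reflecting $g_a'(1)=1$), and setting $\alpha=\frac{1}{1-a}$, the $u^{2}$-coefficient simplifies to $\tfrac{1}{3}+2\alpha=\frac{a-7}{3(a-1)}$. At $a=7$ (so $\alpha=-\tfrac{1}{6}$), a direct check shows that the $u^{2}$ and $u^{3}$ coefficients vanish identically, and the $u^{4}$ coefficient collapses to $\tfrac{2}{27}-\tfrac{1}{9}+\tfrac{1}{9}-\tfrac{1}{27}=\tfrac{1}{27}$, giving the second case.

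For part~2, part~1 tells me $z=1$ is a parabolic fixed point of $g_a$ with multiplier $1$. For $a\neq 7$ the leading nonlinear coefficient $c=\frac{a-7}{3(a-1)}$ is nonzero, so by the Leau--Fatou flower theorem there is a unique repelling direction at $1$, namely the ray along $c^{-1}\mathbb{R}_{>0}$. For $a=7$ the leading nonlinear term $\tfrac{1}{27}u^{4}$ gives three repelling directions arranged symmetrically, one of which is the positive real axis, which is what the statement singles out. The construction is then to fix a small disk $D$ around $z=1$, leave $\partial\Delta_{\J}$ and $\partial\Delta_{\covq}$ unchanged outside a slightly larger concentric disk, and replace each boundary arc inside $D$ by a smooth real-analytic arc whose tangent at $1$ meets the repelling direction (respectively, the real axis) transversally, interpolating smoothly on the annular region in between.

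The delicate step will be verifying that the perturbed sets still form a Klein combination pair. The covering property $\Delta_{\J}\cup\Delta_{\covq}=\widehat{\mathbb{C}}\setminus\{1\}$ is clearly preserved by a sufficiently small $C^{0}$-perturbation supported in $D$. What needs more care is that $\J(\Delta_{\J})\cap\Delta_{\J}=\emptyset$ and $\covq(\Delta_{\covq})\cap\Delta_{\covq}=\emptyset$ persist. Away from $D$ nothing has changed, so the argument concentrates near $z=1$, where $\J$ and the branch $h$ of $\covq$ are local biholomorphisms with derivative $-1$ fixing $1$; to first order each swaps the two half-neighborhoods bounded by any smooth arc through $1$. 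A second-order analysis using the Taylor expansions from part~1 then shows that, provided the tangent line of the perturbed boundary is chosen transverse to the fixed line of each differential, the disjointness conditions survive on a sufficiently small $D$; the transversality to the repelling direction of $g_a=\J\circ h$ captures the matching condition between these two local constraints.
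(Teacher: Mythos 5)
First, a point of reference: the paper does not prove this proposition at all --- it is imported verbatim from \cite{BulLom20I} and stated without argument --- so there is no in-paper proof to compare against. Judged on its own terms, your part~1 is correct and essentially complete. The branch identification $h(z)=\frac{-z+\sqrt{12-3z^2}}{2}$ is the right one (the other root of $w^2+zw+z^2-3=0$ at $z=1$ is $-2$), and your coefficients check out: with $u=z-1$ one gets $h(1+u)=1-u-\tfrac{1}{3}u^2-\tfrac{1}{9}u^3-\tfrac{2}{27}u^4-\cdots$ and $\J(1+v)=1-v+2\alpha v^2-4\alpha^2v^3+8\alpha^3v^4+\cdots$ with $\alpha=\frac{1}{1-a}$, whence the $u^2$-coefficient of $g_a$ is $\tfrac13+2\alpha=\frac{a-7}{3(a-1)}$, and at $a=7$ the $u^2$- and $u^3$-coefficients vanish and the $u^4$-coefficient is $\tfrac{1}{27}$, exactly as you state. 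Your identification of the repelling directions (one ray for $a\neq 7$, three symmetric rays including $\mathbb{R}_{>0}$ for $a=7$) is also correct.

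Part~2, however, has a genuine gap where its only nontrivial content lies: the persistence of the disjointness conditions $\J(\Delta_{\J})\cap\Delta_{\J}=\emptyset$ and $\covq(\Delta_{\covq})\cap\Delta_{\covq}=\emptyset$ after the local modification. You correctly observe that $\J$ and the branch $h$ both fix $1$ with derivative $-1$, so that to first order either map swaps the two sides of any smooth arc through $1$; but ``to first order'' is exactly not enough here, since the set-theoretic disjointness is sensitive to the second-order bending of the arc relative to the quadratic terms of $\J$ and $h$ (whose difference is what produces the parabolic coefficient $\tfrac13+2\alpha$ of $g_a$ in the first place). The sentence ``a second-order analysis \ldots then shows that \ldots the disjointness conditions survive'' names the step without performing it, and it is not obvious that an \emph{arbitrary} tangent direction transverse to the repelling direction works --- one must actually exhibit arcs (e.g.\ invariant under the respective local involutions, which exist since each of $\J$ and $h$ is locally conjugate to $v\mapsto -v$) and verify the two constraints are simultaneously satisfiable together with the covering condition $\Delta_{\J}\cup\Delta_{\covq}\supset D\smallsetminus\{1\}$ on the perturbation disk. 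You should also make explicit that the perturbation disk can be taken $\J$-invariant (and handle the second branch of $\covq$, which maps a neighborhood of $1$ near $-2$ and so is governed by the unperturbed part of the pair); these are minor, but the second-order matching argument itself needs to be written out for the proof to stand.
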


If $a\in\mathcal{K}$ and $(\Delta_{\J},\Delta_{\covq})$ is the modified Klein combination pair as in part 2 of the above proposition, then we define the \emph{forward} and \emph{backwards limit sets} of $\F$ as
$$\Lambda_{a,+}\coloneqq\bigcap\limits_{n=0}^{\infty}\F^n(\widehat{\mathbb{C}}\setminus\Delta_{\J})\mbox{ and }\Lambda_{a,-}\coloneqq\bigcap\limits_{n=1}^{\infty}\F^{-n}(\overline{\Delta_{\J}}),$$
respectively. Observe that  $\F=\J\circ\F^{-1}\circ\J$, and that $\Lambda_{a,+}=\J(\Lambda_{a,-})$.

We denote by $\F|$ the two-sided restriction $\F|:\Lambda_{a,-}\to\Lambda_{a,-}$, defined as $\F|(z)=\F(z)\cap\Lambda_{a,-}$ for $z\in\Lambda_{a,-}$. This is a well defined, $2$-to-$1$ map. In \cite{BulLom20I}, the authors showed that $\F|$ extends to a single-valued holomorphic map in a neighborhood of every point in $\partial\Lambda_{a,-}\setminus\lbrace -2\rbrace$, where $\lbrace -2\rbrace=\F^{-1}(1)\setminus\lbrace 1\rbrace$. We denote by $\f$ this extension. Furthermore, they proved the following.

\begin{theorem}[Bullett-Lomonaco]
After a surgery supported on $\widehat{\mathbb{C}}\setminus\Lambda_{a,-}$, the map $\f$ is hybrid equivalent to a parabolic quadratic rational map $P_A(z)=z+\frac{1}{z}+A$ on $\Lambda_{a,-}$, where $A\in\mathbb{C}$. That is, there exists a quasi-conformal map $h$ on a neighborhood of $\Lambda_{a,-}$ satisfying that $h\circ\f=P_A\circ h$ and $\overline{\partial}h=0$ on $\Lambda_{a,-}$.
\end{theorem}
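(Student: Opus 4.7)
The plan is to adapt the Douady--Hubbard straightening theorem to the parabolic setting, following the general strategy of quasi-conformal surgery. First, I would verify that $\f$ admits the structure of a \emph{parabolic-like map} of degree $2$: there exist nested neighborhoods $V\Subset U$ of $\Lambda_{a,-}$ (with $z=1$ on the common boundary) such that $\f:V\setminus\{1\}\to U\setminus\{1\}$ is a proper holomorphic degree-$2$ branched covering with a unique parabolic fixed point at $z=1$ of multiplier $1$. The previous proposition identifies the local normal form of $\f$ at $z=1$ (a simple parabolic fixed point with one attracting/repelling petal pair when $a\neq 7$, and a degenerate multi-petal point when $a=7$), and the modified Klein combination pair provides boundaries transverse to the repelling direction, which is exactly what is needed for the target domains of the surgery.

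Second, I would set up the model map. The one-parameter family $P_A(z)=z+1/z+A$ consists of quadratic rational maps with a parabolic fixed point at $\infty$ of multiplier $1$, and the external dynamics of $P_A$ in the complement of its filled Julia set are controlled by Fatou coordinates in a repelling petal at $\infty$. The task is to choose $A$ so that the dynamics of $P_A$ in this exterior match, up to quasi-conformal conjugacy, the dynamics of the ``other branch'' of the correspondence $\F$ on a neighborhood of $\widehat{\mathbb{C}}\setminus\Lambda_{a,-}$.

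Third, perform the surgery. On a small annular collar of $\partial\Lambda_{a,-}$, compare Fatou coordinates of $\f$ near $z=1$ with those of $P_A$ near $\infty$ to define a quasi-conformal candidate $h_0$ satisfying $h_0\circ\f=P_A\circ h_0$ on the inner boundary of the collar. Extend $h_0$ smoothly (but possibly not conformally) across the collar to obtain a quasi-conformal homeomorphism, then propagate it to all of $\widehat{\mathbb{C}}\setminus\Lambda_{a,-}$ using the functional equation. Pulling back the standard complex structure via this extension produces a Beltrami coefficient $\mu$ on a neighborhood of $\Lambda_{a,-}$ with $\mu=0$ on $\Lambda_{a,-}$, and which is $\f$-invariant by construction. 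The Measurable Riemann Mapping Theorem then yields the quasi-conformal $h$ with $\overline{\partial}h=0$ on $\Lambda_{a,-}$ and $h\circ\f=g\circ h$ for some holomorphic $g$; by the degree count and the parabolic structure transported through $h$, $g$ must be a parabolic quadratic rational map, hence conjugate to some $P_A$.

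The main obstacle is twofold. First, one must show the gluing is genuinely possible: the repelling Fatou coordinates of $\f$ at $z=1$ must be matched to those of $P_A$ at $\infty$, and this matching fixes the residue/formal invariant of the parabolic point, which in turn constrains $A$. The case $a=7$, with a degenerate parabolic point, requires matching higher-order Écalle--Voronin data and a separate local model. Second, one must verify that the straightened map is a global rational map of the form $P_A$ (rather than just a parabolic-like map up to hybrid equivalence); this uses the uniqueness of the parabolic quadratic rational map in each hybrid class, together with the parameter $A$ being determined by the conformal data of $\f$ on $\Lambda_{a,-}$.
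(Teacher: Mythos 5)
This theorem is not proved in the paper at all: it is imported verbatim from Bullett--Lomonaco (\cite{BulLom20I}), so there is no internal proof to compare against. Your outline does, in fact, reproduce the strategy of the cited source, which rests on Lomonaco's theory of degree-$2$ parabolic-like maps: one exhibits $\f$ as a parabolic-like restriction on a neighborhood of $\Lambda_{a,-}$ with parabolic point $z=1$ on the boundary of the domain, and then invokes the straightening theorem for parabolic-like maps, whose conclusion is precisely hybrid equivalence to a member of the family $Per_1(1)=\lbrace P_A\rbrace$. So the route is the right one.

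Two cautions if you intend this as more than a plan. First, the step ``extend $h_0$ smoothly across the collar to obtain a quasi-conformal homeomorphism'' is exactly where the parabolic case departs from Douady--Hubbard: the annular collar degenerates at the parabolic point, and a naive interpolation fails to be quasi-conformal there. The gluing must be carried out in repelling Fatou coordinates through the horn of the parabolic point, and controlling the dilatation of that interpolation is the substantive analytic content of the straightening theorem; as written you assume it rather than prove it. Second, your assertion that matching Fatou coordinates ``fixes the residue/formal invariant, which in turn constrains $A$'' is not quite right: the formal invariant beyond the multiplier is not a quasi-conformal conjugacy invariant, and $A$ is determined (up to the symmetry $A\mapsto -A$) by the hybrid class via the straightening construction itself, not by matching formal data. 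Finally, note that for $a=7$ the expansion $z+\tfrac{1}{27}z^4+\cdots$ gives a fixed point of multiplicity $4$, which no quadratic rational map can have; the straightening can only see the petal structure inside $\Lambda_{a,-}$, so this case genuinely requires the separate argument you flag rather than a routine modification.
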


Such map sends $\Lambda_{a,-}$ to the filled Julia set $K(P_A)$ of $P_A$, and hence $\partial\Lambda_{a,-}$ onto $\mathcal{J}(P_A)$, the Julia set of $P_A$. From \cite{FreLopMan83} and \cite{Lju83}, there exists a measure $\tilde{\mu}_A$ with $\supp(\tilde{\mu}_A)=\mathcal{J}(P_A)$ so that $\frac{1}{2^n}(P_A)^n\delta_{z_0}$ is weakly convergent to $\tilde{\mu}_A$, for all $z$ but at most $2$. Furthermore, $\tilde{\mu}_A$ is the measure of maximal entropy for $P_A$, having entropy $\log(2)$. We take $\mu_-\coloneqq h^*\tilde{\mu}_A$ and
$$\mathcal{E}_a\coloneqq\left\lbrace\begin{matrix}\emptyset&a\neq 5\\
\lbrace -1,2\rbrace&a=5.\end{matrix}\right.$$
It follows that $\supp(\mu_-)=\partial\Lambda_{a,-}$. In \cite{Mat23}, we prove:

\begin{theorem}
For all $z\in\widehat{\mathbb{C}}\setminus\mathcal{E}_a$, $\frac{1}{2^n}(\F^n)^*\delta_z$ is weakly convergent to $\mu_-$, and $\frac{1}{2^n}(\F^{-n})_*\delta_z$ is weakly convergent to $\mu_+\coloneqq\J^*\mu_-$, which is a measure supported on $\partial\Lambda_{a,+}$.
\end{theorem}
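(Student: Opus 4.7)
The plan is to reduce the claim to the classical equidistribution of preimages for the parabolic quadratic rational map $P_A$, due to Freire--Lopes--Ma\~n\'e and Lyubich, transported through the hybrid conjugacy $h$. I would carry this out in three stages: first on $\partial\Lambda_{a,-}$, then extending to all of $\widehat{\mathbb{C}}\setminus\E$, and finally transferring to $\mu_+$ via the involutive symmetry $\F^{-1}=\J\circ\F\circ\J$.

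In the first stage, the key is that $\partial\Lambda_{a,-}$ is totally invariant for $\F$. Since $\F$ is $(2:2)$ and $\f$ has degree $2$, and since every $\F$-preimage of a point in $\partial\Lambda_{a,-}$ must actually lie in $\partial\Lambda_{a,-}$ (by backward invariance of $\mathcal{J}(P_A)$ under $P_A$, transported through $h$), the multiplicities match and $\F^{-n}(z)=\f^{-n}(z)$ as multisets for every $z\in\partial\Lambda_{a,-}$. The Freire--Lopes--Ma\~n\'e / Lyubich theorem for $P_A$ then gives
$$\frac{1}{2^n}\sum_{w\in P_A^{-n}(h(z))}\delta_w\;\longrightarrow\;\tilde{\mu}_A$$
weakly for $h(z)$ outside the (at most two-point) exceptional set of $P_A$. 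Pulling back through $h$ yields $\frac{1}{2^n}(\F^n)^*\delta_z\to\mu_-$ for every $z\in\partial\Lambda_{a,-}\setminus\E$, where $\E$ is precisely the $h$-preimage of this exceptional set.

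For the second stage, I would extend to arbitrary $z\in\widehat{\mathbb{C}}\setminus\E$. The Klein combination structure yields $\F^{-1}(\overline{\Delta_{\J}})\subset\overline{\Delta_{\J}}$, and therefore $\F^{-n}(z)\subset\overline{\Delta_{\J}}$ for all $n\geq 1$ whenever $z\in\overline{\Delta_{\J}}$. Since $\Lambda_{a,-}=\bigcap_n\F^{-n}(\overline{\Delta_{\J}})$ and $\f$ is uniformly expanding on $\partial\Lambda_{a,-}$ (inherited from the repulsion of $P_A$ on $\mathcal{J}(P_A)$), the sets $\F^{-n}(z)$ cluster near $\partial\Lambda_{a,-}$. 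A Brolin-type approximation, replacing $\delta_z$ by a smooth probability measure $\omega$ and using the $L^2$-contraction of Proposition \ref{norm} to control $\frac{1}{2^n}(\F^n)^*(\delta_z-\omega)$, then upgrades clustering to equidistribution against $\mu_-$. Points in $\widehat{\mathbb{C}}\setminus\overline{\Delta_{\J}}$ fall into $\overline{\Delta_{\J}}$ after one application of $\F^{-1}$, so the argument applies after a harmless shift of index.

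The third stage is formal. From $\F=\J\circ\covq$ and the involutivity of $\J$ and $\covq$ one obtains $\F^{-1}=\covq\circ\J=\J\circ\F\circ\J$, hence $(\F^{-n})_*=\J^*(\F^n)^*\J^*$ on measures. Combining this with $\J^*\delta_z=\delta_{\J(z)}$ and the conclusion of the first two stages gives $\frac{1}{2^n}(\F^{-n})_*\delta_z\to\J^*\mu_-=\mu_+$, which is supported on $\J(\partial\Lambda_{a,-})=\partial\Lambda_{a,+}$. I expect the main obstacle to be the Brolin-type step in Stage 2: establishing the $L^2$ smallness of $\frac{1}{2^n}(\F^n)^*(\delta_z-\omega)$ uniformly requires pluripotential-theoretic arguments adapted to the correspondence setting, and pinning down the exceptional set $\E$ in the parabolic regime (where $P_A$ has a parabolic fixed point on $\mathcal{J}(P_A)$) requires careful analysis.
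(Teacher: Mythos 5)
The paper does not actually prove this theorem; it is quoted verbatim from \cite{Mat22}, so there is no in-paper argument to compare against. Judged on its own terms, your Stage 1 (reduction to Freire--Lopes--Ma\~n\'e/Lyubich on $\partial\Lambda_{a,-}$ via the hybrid conjugacy, using that all $\F$-preimages of a point of $\Lambda_{a,-}$ lie in $\Lambda_{a,-}$ because $\F|$ is $2$-to-$1$ while $\F$ is $(2:2)$) and Stage 3 (the symmetry $\F^{-1}=\J\circ\F\circ\J$) are sound. The problems are concentrated in Stage 2 and in your description of $\E$.

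The Stage 2 mechanism you propose cannot work as stated. You want to control $\frac{1}{2^n}(\F^n)^*(\delta_z-\omega)$ by the $L^2$ operator bound of Proposition \ref{norm}, in the style of Dinh--Kauffman--Wu. But that method produces decay only when $\Vert\frac{1}{d}F^*\Vert<1$, i.e.\ when the correspondence is \emph{not} weakly modular; the proof of Theorem \ref{B} in this paper shows that $\Vert\frac{1}{2}\F^*\Vert=1$ exactly (the form $\phi_R$ is preserved in norm), and the whole point emphasized in the introduction is that $\F$ sits in the gap where the $L^2$-contraction argument is unavailable. So the ``Brolin-type'' step, which you yourself flag as the main obstacle, is not merely technical: the tool you reach for is ruled out by the weak modularity established in Theorem \ref{B}, and an entirely different argument (e.g.\ tracking how the $2^n$ preimages are absorbed into $\overline{\Delta_{\J}}$ by the Klein combination structure and shadow $\f^{-1}$ near $\Lambda_{a,-}$) is needed to pass from clustering to equidistribution. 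Separately, your identification of $\E$ as ``the $h$-preimage of the exceptional set of $P_A$'' is wrong. The parabolic map $P_A(z)=z+\frac1z+A$ has empty exceptional set (its only candidate totally invariant point, the parabolic fixed point at $\infty$, is not a critical fixed point), whereas $\mathcal{E}_5=\lbrace -1,2\rbrace$ arises from the global algebra of the correspondence: at $a=5$ one computes $\covq(2)=\lbrace -1\rbrace$ with multiplicity $2$ and $\J_5(-1)=2$, so $\F_5^{-1}(-1)=\lbrace -1\rbrace$ doubly and $\F_5^{-1}(2)=\lbrace 2,-1\rbrace$; the set $\lbrace -1,2\rbrace$ has finite backward grand orbit and lies outside $\partial\Lambda_{a,-}$. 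Any correct Stage 2 must detect this exceptional set from the correspondence itself, not from $P_A$ through $h$ (which is only defined near $\Lambda_{a,-}$ in any case).
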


Similarly, given $(R,S)\in\mathcal{K}'$ and a Klein combination pair $(\Delta_R,\Delta_S)$ for $\mathcal{F}_{R,S}$, we define the \emph{forward} and \emph{backwards limit sets} of $\mathcal{F}_{R,S}$ as the sets
$$\Lambda_+\coloneqq\bigcap\limits_{n=1}^{\infty}\mathcal{F}_{R,S}^n(\Delta_S)\mbox{ and }\Lambda_-\coloneqq\bigcap\limits_{n=0}^{\infty}\mathcal{F}_{R,S}^{-n}(\Delta_R),$$
respectively. The next theorem is from \cite{Bul00}.

\begin{theorem}\label{KleinRS}
Let $(R,S)\in\mathcal{K}'$ having a Klein combination pair $(\Delta_R,\Delta_S)$. Then there exist rational maps $f_+$ and $f_-$, each of degree $(\deg(R)-1)(\deg(S)-1)$, so that the two sided restrictions $\mathcal{F}_{R,S}^{-1}|:\Lambda_+\to\Lambda_+$ and $\mathcal{F}_{R,S}|:\Lambda_-\to\Lambda_-$ are conjugate to the maps $f_+$  on $K(f_+)$ and $f_-$ on $K(f_-)$ respectively, conformally on interiors.
\end{theorem}
Here $K(f_+)$ and $K(f_-)$ denote the filled Julia sets of $f_+$ and $f_-$, respectively. Let $\tilde{\mu}_{f_+}$ be the measure of maximal entropy for the rational map $f_+$, and $\tilde{\mu}_{f_-}$ be the measure of maximal entropy for $f_-$. Let $h_+$ denote the conjugacy between $\mathcal{F}_{R,S}|^{-1}$ and $f_+$, and $h_-$ be the conjugacy between $\mathcal{F}|$ and $f_-$. Since $\supp(\tilde{\mu}_{f_\pm})=\mathcal{J}(f_{\pm})$, then $\mu^{\pm}\coloneqq(h_\pm)^*\mu_{f_{\pm}}$ is a measure whose support is $\supp(\mu^{\pm})=\partial\Lambda_\pm$.

In Section \ref{examples} we will prove that the equidistribution measure $\mu_-$ found in \cite{Mat23} is of maximal entropy for $\F$, just as in the case of rational maps. We also prove that the measure $\mu^-$ is of maximal entropy for $\mathcal{F}_{R,S}$.
\label{ent}
\subsection{Entropy for the families $\lbrace\F\rbrace_{a\in\mathcal{K}}$ and $\lbrace\mathcal{F}_{R,S}\rbrace_{R,S}$}\label{examples}
In this section, we prove Theorem \ref{A} and Theorem \ref{C}.

\begin{lemma}\label{measuresupport}
If $a\in\mathcal{K}$ and $\mu\in\mathcal{M}_1(\F)$, then $\supp(\mu)\subset\Lambda_{a,-}\cup\Lambda_{a,+}$.
\end{lemma}

\begin{proof}

Put $A=\overline{\Delta}_{\J}\setminus\Lambda_{a,-}$ and note that $(\F^{-n}(A))_{n=0}^\infty$ is a decreasing sequences of nested sets, whose intersection is empty. Furthermore, if $\mu$ is $\F$ invariant, then $(\mu(\F^{-n}(A)))_{n=0}^\infty$ is constant equal to $\mu(A)$. Therefore, $\mu(A)=0$. Now let $B\subset\widehat{\mathbb{C}}\setminus(\Lambda_{a,-}\cup\Lambda_{a,+})$ be compact. Then there exists $n_0\in\mathbb{N}$ such that $\F^{-n_0}(B)\subset A$, and thus,
$$\mu(B)\leq\mu(\F^{-n_0}(B))\leq\mu(A)=0.$$
This proves that $\supp\mu\subset\Lambda_{a,-}\cup\Lambda_{a,+}$.
\end{proof}

\begin{proof}[Proof of Theorem \ref{A}]
Metric entropy is preserved under ergodic equivalences, then 
$$H_{\mu_-}\left(\f,\bigvee\limits_{n=1}^{N-1}\mathcal{P}^n\right)=H_{\tilde{\mu}_A}\left(P_A,\bigvee\limits_{n=1}^{N-1}h(\mathcal{P})^n\right).$$
From Lemma \ref{restriction} and the above,
$$h_{\mu_-}(\F)\geq h_{\mu_-}(\f)=h_{\tilde{\mu}_A}(P_A)=\log(2),$$
as $\tilde{\mu}_A$ is the measure of maximal entropy of the rational map $P_A$ from \cite{FreLopMan83},\cite{Lju83}.
Using the Half-Variational Principle, Gromov's Inequality and Lemma \ref{entropy inequality}, we get that
$$\log(2)\leq h_{\mu_-}(\F)\leq\sup\limits_{\mu\in\mathcal{M}_1(\F)}h_{\mu}(\F)\leq h_{top}(\F)\leq h(\Gamma_a)\leq\log2.$$

Therefore, the measure $\mu_-$ maximizes entropy for the correspondence $\F$. From the conjugacy $\F\circ\J=\J\circ\F^{-1}$ and the fact that $\mu_+=\J^*\mu_-$, we have that $\mu_+$ is of maximal entropy for $\F^{-1}$.

Let $\mu$ be an invariant Borel probability measure for $\F$. By Lemma \ref{measuresupport} we know that $\supp\mu\subset\Lambda_{a,-}\cup\Lambda_{a,+}$.

Let $K^{\pm}\coloneqq \supp\mu\cap\Lambda_{a,\pm}$, and let $\mu^{\pm}$ be defined as 
$$\mu^{\pm}(A)=\frac{1}{\mu(\Lambda_{a,\pm})}\mu(A\cap\Lambda_{a,\pm}),$$
for all $A\in\mathcal{B}$. Write $\alpha_{\pm}\coloneqq \mu(\Lambda_{a,\pm})$. Then, $\mu^{\pm}$ is a Borel probability measure with $\supp\mu^{\pm}=K^{\pm}$, and 
$$\mu=\alpha_-\mu^-+\alpha_+\mu^+$$
with $\alpha_-+\alpha_+=1$.
\\

For a finite measurable partition $\mathcal{P}$, denote $\mathcal{P}^{\pm}=\lbrace P\in\mathcal{P}|P\subset\Lambda_{a,\pm}\rbrace$. Suppose $\mathcal{P}$ is a finite measurable partition, refined and ordered so that $\mathcal{P}=\mathcal{P}^-\cup\lbrace\lbrace 1\rbrace\rbrace\cup\mathcal{P}^+$. Then
$$\widetilde{\mathcal{P}}_n=\widetilde{(\mathcal{P}^-)}_n\cup\lbrace\lbrace 1\rbrace\rbrace\cup\left(\widetilde{(\mathcal{P}^+)}_n\vee\lbrace\Lambda_{a,+}\setminus\lbrace 1\rbrace\rbrace\right)=(\widetilde{\mathcal{P}}_n)^-\cup(\widetilde{\mathcal{P}}_n)^+.$$

It is easily shown that $\mathcal{P}'_n=(\mathcal{P}^-)'_n\cup\lbrace\lbrace 1\rbrace\rbrace\cup\left((\mathcal{P}^+)'_n\vee\lbrace\Lambda_{a,+}\setminus\lbrace 1\rbrace\right)$.  Thus,
\begin{eqnarray*}
h_{\mu}(\F,\mathcal{P})&=&\limsup\limits_{n\to\infty}\frac{1}{n}H_{\mu}(\mathcal{P}'_n)\\
&=&\limsup\limits_{n\to\infty}\frac{1}{n}\left[\sum\limits_{P\in\mathcal{P}'_n}-\mu(P)\log\mu(P)\right]\\
&=&\limsup\limits_{n\to\infty}\frac{1}{n}\left[\sum\limits_{P\in(\mathcal{P}^-)'_n}-\mu(P)\log\mu(P)-\mu(\lbrace 1\rbrace)\log(\lbrace 1\rbrace)+\sum\limits_{P\in(\mathcal{P}^+)'_n}-\mu(P)\log\mu(P)\right]
\end{eqnarray*}
We have that
$$h_{\mu}(\F.\mathcal{P})=\limsup\limits_{n\to\infty}\frac{1}{n}\left[\alpha_- H_{\mu^-}\left(\f,\left(\mathcal{P}^-\cup\lbrace 1\rbrace\rbrace\right)'_n\right)+\alpha_+H_{\mu^+}\left((\J\circ\f\circ\J)^{-1},\left(\mathcal{P}^+\cup\lbrace\lbrace 1\rbrace\rbrace\right)'_n\right)\right].$$
Since $\limsup\limits_{n\to\infty}\frac{1}{n} H_{\mu^-}\left(\f,\left(\mathcal{P}^-\cup\lbrace 1\rbrace\rbrace\right)'_n\right)=\lim\limits_{n\to\infty}\frac{1}{n}\alpha_- H_{\mu^-}\left(\f,\left(\mathcal{P}^-\cup\lbrace 1\rbrace\rbrace\right)'_n\right),$ then
$$h_{\mu}(\F.\mathcal{P})=\alpha_-h_{\mu^-}(\f,\left(\mathcal{P}^-\cup\lbrace\lbrace 1\rbrace\rbrace)\right)'_n+\alpha_+h_{\mu^+}
((\J\circ\f\circ\J)^{-1},\left(\mathcal{P}^+\cup\lbrace\lbrace 1\rbrace\rbrace)\right)'_n.$$

Taking supremum over all partitions $\mathcal{P}$, we obtain that 
$$h_{\mu}(\F)\geq\alpha_-h_{\mu^-}(\f)+\alpha_+h_{\mu^+}((\J\circ\f\circ\J)^{-1}).$$
Since $\f$ is quasiconformally conjugate to a quadratic rational map, then $h_{\mu^-}(\f)\leq\log 2$, and the equality holds only when $\mu^-=\mu_-$, since $\mu_-$ is precisely the pullback of the unique measure of maximal entropy of such rational map. From Lemma \ref{rational}, and the same conjugacy, we have that $h_{\mu^+}
((\J\circ\f\circ\J)^{-1},\left(\mathcal{P}^+\cup\lbrace\lbrace 1\rbrace\rbrace)\right)'_n=0.$

It follows that $h_{\mu}(\F)=\log2$ if and only if $\alpha_-=1$ and $\mu^-=\mu_-$. That is, $\mu=\mu_-$.
\end{proof}

\begin{lemma}\label{measuresupportcomposition}
If $(R,S)\in\mathcal{K}'$ and $\nu\in\mathcal{M}_1(\mathcal{F}_{R,S})$, then $\supp(\nu)\subset\Lambda_-\cup\Lambda_+$.
\end{lemma}
The proof of this lemma is the same as Lemma \ref{measuresupport} by taking $A=\overline{\Delta_{R}}\setminus\Lambda_-$ and $B\subset\widehat{\mathbb{C}}\setminus (\Lambda_-\cup\Lambda_+)$ compact.

\begin{proof}[Proof of Theorem \ref{C}]
We follow the proof of Theorem \ref{A}. The conjugacy between $\mathcal{F}|:\Lambda_-\to\Lambda_-$ and $f_-:\mathcal{J}(f_-)\to\mathcal{J}(f_-)$ gives us the inequality 
$$h_{\mu^-}(\mathcal{F}_{R,S})\geq h_{\mu_-}(f_-).$$
By \cite{FreLopMan83} and \cite{Lju83}, we have that $h_{\mu_{R,S}}(f_-)=\deg(f_-)=\log((\deg(R)-1)(\deg(S)-1))$. Making use of the Half-Variational Principle, Gromov's Inequality, and Lemma \ref{entropy inequality}, we obtain
$$h_{\mu^-}(\mathcal{F}_{R,S})=h_{top}(\mathcal{F}_{R,S})=\log((\deg(R)-1)(\deg(S)-1)).$$
Given that for $\nu\in\mathcal{M}_1(\mathcal{F}_{R,S})$ we have that $\supp\nu\subset\Lambda_-\cup\Lambda_+$ from Lemma \ref{measuresupportcomposition}, we have that for $\mathcal{P}=\mathcal{P}^-\cup\mathcal{P}^+$,
$$h_{\nu}(\mathcal{F}_{R,S},\mathcal{P})\geq\alpha h_{\nu^-}(\restr{\mathcal{F}_{R,S}}{\Lambda_-},\mathcal{P}^-)+(1-\alpha) h_{\nu^+}(\restr{\mathcal{F}_{R,S}}{\Lambda_+}^{-1},\mathcal{P}^+),$$
where $\nu=\alpha\nu^-+(1-\alpha)\nu^+$, and $\supp\nu^{\pm}\subset\Lambda_{\pm}$. By Theorem \ref{KleinRS} we have that $\restr{\mathcal{F}_{R,S}^{-1}}{\Lambda_+}$ is quasiconformally conjugate to the inverse of a rational map. Therefore, its metric entropy vanishes. Thus, $\alpha=1$ and the entropy maximizes only for $\nu=\mu^-$. Therefore, $\mu^-$ is the unique measure of maximal entropy for $\mathcal{F}_{R,S}$.
\end{proof}

\bibliographystyle{plain}
\bibliography{bibliography}

@article{BhaSri16,
	author = {Gautam Bharali and Shrihari Sridharan},
	date-modified = {2021-12-17 17:28:58 -0500},
	doi = {10.1080/17476933.2016.1185419},
	journal = {Complex Variables and Elliptic Equations},
	language = {en},
	number = {12},
	pages = {1587-1613},
	publisher = {Taylor & Francis},
	title = {The dynamics of holomorphic correspondences of {$P^1$}: invariant measures and the normality set},
	url = {https://doi.org/10.1080/17476933.2016.1185419},
	volume = {61},
	year = {2016},
	Bdsk-Url-1 = {https://doi.org/10.1080/17476933.2016.1185419}}

@article{Bul00,
	author = {Bullett, Shaun},
	date-modified = {2022-07-05 16:04:23 -0400},
	journal = {Conformal Geometry and Dynamics of the American Mathematical Society},
	number = {4},
	pages = {75--96},
	title = {A combination theorem for covering correspondences and an application to mating polynomial 		maps with Kleinian groups},
	volume = {4},
	year = {2000}}

@inproceedings{BulHar00,
	author = {S. R. Bullett and W. J. Harvey},
	booktitle = {Electronic Research Announcements of the AMS},
	date-modified = {2021-12-17 17:38:35 -0500},
	pages = {21--30},
	title = {{Mating quadratic maps with Kleinian groups via quasiconformal surgery}},
	year = {2000}}

@article{BulLom20I,
	author = {Bullett, Shaun and Lomonaco, Luna},
	journal = {Inventiones mathematicae},
	number = {1},
	pages = {185--210},
	publisher = {Springer},
	title = {Mating quadratic maps with the modular group II},
	volume = {220},
	year = {2020}}

@article{BulPen94,
	author = {Bullett, Shaun and Penrose, Christopher},
	date-modified = {2021-12-12 17:55:07 -0500},
	journal = {Inventiones mathematicae},
	keywords = {limit sets for correspondence; filled-in Julia sets; M{\"o}bius transformation},
	number = {3},
	pages = {483-512},
	title = {Mating quadratic maps with the modular group},
	url = {http://eudml.org/doc/144179},
	volume = {115},
	year = {1994},
	Bdsk-Url-1 = {http://eudml.org/doc/144179}}

@article {CarMetMor15,
    AUTHOR = {Carrasco-Olivera, Dante and Metzger Alvan, Roger and Morales
              Rojas, Carlos Arnoldo},
     TITLE = {Topological entropy for set-valued maps},
   JOURNAL = {Discrete Contin. Dyn. Syst. Ser. B},
  FJOURNAL = {Discrete and Continuous Dynamical Systems. Series B. A Journal
              Bridging Mathematics and Sciences},
    VOLUME = {20},
      YEAR = {2015},
    NUMBER = {10},
     PAGES = {3461--3474},
      ISSN = {1531-3492,1553-524X},
   MRCLASS = {37B40 (54C60)},
  MRNUMBER = {3411534},
MRREVIEWER = {James\ Pierre\ Kelly},
       DOI = {10.3934/dcdsb.2015.20.3461},
       URL = {https://doi.org/10.3934/dcdsb.2015.20.3461},
       }

@article{CloOhUll01,
	author = {Clozel, Laurent and Oh, Hee and Ullmo, Emmanuel},
	journal = {Inventiones mathematicae},
	number = {2},
	pages = {327--351},
	publisher = {Springer},
	title = {Hecke operators and equidistribution of Hecke points},
	volume = {144},
	year = {2001}}

@article{DinKauWu20,
	author = {Dinh, Tien-Cuong and Kaufmann, Lucas and Wu, Hao},
	date-modified = {2021-12-17 17:37:03 -0500},
	journal = {International Journal of Mathematics},
	number = {05},
	pages = {2050036},
	publisher = {World Scientific},
	title = {{Dynamics of holomorphic correspondences on Riemann surfaces}},
	volume = {31},
	year = {2020}}

@article{DinSib08II,
	author = {Dinh, Tien-Cuong and Sibony, Nessim},
	journal = {Israel Journal of Mathematics},
	number = {1},
	pages = {29--44},
	publisher = {Springer},
	title = {Upper bound for the topological entropy of a meromorphic correspondence},
	volume = {163},
	year = {2008}}

@article{FreLopMan83,
	author = {Freire, Alexandre and Lopes, Artur and Man{\'e}, Ricardo},
	journal = {Boletim da Sociedade Brasileira de Matem{\'a}tica-Bulletin/Brazilian Mathematical Society},
	number = {1},
	pages = {45--62},
	publisher = {Springer},
	title = {An invariant measure for rational maps},
	volume = {14},
	year = {1983}}

@article{Gro03,
  title={On the entropy of holomorphic maps},
  author={Gromov, Mikha{\i}l},
  journal={Enseign. Math},
  volume={49},
  number={3-4},
  pages={217--235},
  year={2003}}

@article{KelTen17,
	author = {Kelly, James and Tennant, Tim},
	date-modified = {2022-12-11 17:41:45 -0500},
	journal = {Houston J. Math.},
	number = {1},
	pages = {263--282},
	title = {Topological entropy on set-valued functions},
	volume = {43},
	year = {2017}}

@article{Lju83,
	author = {Ljubich, M Ju},
	journal = {Ergodic theory and dynamical systems},
	number = {3},
	pages = {351--385},
	publisher = {Cambridge University Press},
	title = {Entropy properties of rational endomorphisms of the Riemann sphere},
	volume = {3},
	year = {1983}}

@article{Mat23,
	author = {Matus de la Parra, V.},
	date-added = {},
	date-modified = {},
	journal = {Ergodic Theory and Dynamical Systems},
	title = {Equidistribution for matings of quadratic maps with the modular group},
	year = {2023},
	pages={1-29},
	doi= {doi:10.1017/etds.2023.33}}

@article{VivSir22,
	author = {Vivas, Kendry J and Sirvent, V{\'\i}ctor F},
	journal = {Discrete and Continuous Dynamical Systems-B},
	publisher = {American Institute of Mathematical Sciences},
	title = {Metric entropy for set-valued maps},
	year = {2022}}
\end{document}